\newtheorem{lem}{Lemma}[section]
\newtheorem{theo}[lem]{Theorem}
\newtheorem{coro}[lem]{Corollary}
\newtheorem{propo}[lem]{Proposition}
\newtheorem{rema}[lem]{Remark}
\newtheorem{defi}[lem]{Definition}
\newenvironment{lemma}
{\begin{lem}\sl } {\end{lem}}
\newenvironment{theorem}
{\begin{theo}\sl } {\end{theo}}
\newenvironment{corollary}
{\begin{coro}\sl } {\end{coro}}
\newenvironment{proposition}
{\begin{propo}\sl } {\end{propo}}
\newenvironment{remark}
{\begin{rema}\rm } {\end{rema}}
\newenvironment{proof}{\paragraph*{Proof}}
{\par}
\newcommand\qed{\hfill$\square$}
\newcommand\calE{{\mathcal E}}
\newcommand\calF{{\mathcal F}}
\newcommand\gal{{\mathrm{Gal}}}
\newcommand\GL{{\mathrm{GL}}}
\newcommand\SL{{\mathrm{SL}}}
\newcommand\eps\varepsilon
\newcommand\ph\varphi
\newcommand\C{{\mathbb C}}
\newcommand\F{{\mathbb F}}
\newcommand\Q{{\mathbb Q}}
\newcommand\Z{{\mathbb Z}}
\newcommand\R{{\mathbb R}}
\newcommand\height{{\mathrm h}}
\newcommand\HH{{\mathcal H}}
\newcommand\bfa{{\mathbf a}}
\newcommand\tilJ{{\widetilde J}}
\newcommand\tilg{{\tilde g}}
\newcommand\spl{{\mathrm{split}}}
\newcommand\spcar{{\mathrm{sp.Car.}}}
\newcommand\tors{{\mathrm{tors}}}
\newcommand\cl{{\mathrm{cl}}}
\title{Serre's Uniformity Problem in the Split Cartan Case}
\author{Yuri Bilu, Pierre Parent (Universit\'e de Bordeaux~I)}
\begin{document}

\maketitle

\begin{abstract}
We prove that there exists an integer~$p_{0}$ such that $X_{\mathrm{split}} (p)(\Q )$ is made 
of cusps and CM-points for any prime ${p>p_0}$. Equivalently, for any non-CM elliptic curve~$E$ 
over~$\Q$ and any prime ${p>p_0}$ the image of $\mathrm{Gal} (\bar{\Q} /\Q )$ by the representation 
induced by the Galois action on the $p$-division points of~$E$ is not contained in the normalizer 
of a split Cartan subgroup. This gives a partial answer to an old question of Serre.  
\medskip

\noindent
AMS 2000 Mathematics Subject Classification  11G18 (primary), 11G05, 11G16 (secondary). 

\end{abstract}

\section{Introduction}

Let~$N$ be a positive integer and~$G$ a subgroup of $\GL_{2} (\Z /N\Z )$ such that ${\det G=(\Z/N\Z)^\times}$. Then the corresponding modular 
curve~$X_G$, defined as a complex curve as ${\bar \HH/\Gamma}$, where~$\bar \HH$ is the extended Poincar\'e upper half-plane and~$\Gamma$ 
is the pullback of ${G\cap \SL_2(\Z/N\Z)}$ to $\SL_2(\Z)$, is actually defined over~$\Q$, that is, it has a geometrically integral $\Q$-model.  As usual, 
we denote by~$Y_G$ the finite part of~$X_G$ (that is,~$X_G$ deprived  of the cusps). The curve~$X_{G}$ has a natural (modular) model over~$\Z$ 
that we still denote by~$X_{G}$. The cusps define a closed subscheme of~$X_{G}$ 
over~$\Z$, and we define the relative curve~$Y_{G}$ over~$\Z$ as~$X_{G}$ deprived 
of the cusps. The 
set of integral points $Y_G(\Z)$ consists of those ${P\in Y_G(\Q)}$ for which 
${j(P)\in \Z}$, where~$j$ is, as usual, the modular invariant. 

In the special case when~$G$ is the normalizer of a split (or non-split) Cartan subgroup of  $\GL_{2}(\Z/N\Z )$, 
the curve~$X_G$ is denoted by $X_{\mathrm{split}}(N)$ (or $X_{\mathrm{nonsplit}}(N)$, respectively).
In this note we focus more precisely on the  case when~$G$ is the normalizer of a split 
Cartan subgroup of $\GL_{2}(\Z/p\Z )$ for $p$ a prime number, i.e.~$G$ is conjugate to the set of 
diagonal and anti-diagonal matrices $\mod p$, and we  prove the following theorem.

\begin{theorem}
\label{th1} 
There exists an absolute effective constant~$C$ such that for any prime number~$p$ and 
 any ${P\in Y_{\mathrm{split}}(p)(\Z)}$ we have ${\log|j(P)|\le 2\pi p^{1/2}+6\log p +C}$.
\end{theorem}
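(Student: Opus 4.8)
The plan is to translate the arithmetic hypothesis $j(P)\in\Z$ into an archimedean bound on a point $\tau\in\HH$ representing $P$, and then to exploit the several cusps of $X_{\mathrm{split}}(p)$ via Runge's method. First I would fix a representative $\tau$ of $P$ in a fundamental domain; writing $q=e^{2\pi i\tau}$ and using $j(\tau)=q^{-1}+744+O(|q|)$, one gets $\log|j(P)|=2\pi\im\tau+O(1)$ once $\im\tau$ is moderately large, so the whole problem reduces to proving $\im\tau\le p^{1/2}+\tfrac{3}{\pi}\log p+O(1)$. A point of $Y_{\mathrm{split}}(p)$ is an elliptic curve $E$ with an \emph{unordered} pair $\{C_1,C_2\}$ of independent cyclic subgroups of order~$p$; taking $E=\C/(\Z+\tau\Z)$ with $C_1=\langle 1/p\rangle$ and $C_2=\langle\tau/p\rangle$, the two quotients have $j$-invariants $j_1=j(p\tau)$ and $j_2=j(\tau/p)$, and their $q$-expansions give $\log|j_1|=2\pi p\,\im\tau+O(1)$ while $\log|j_2|=\tfrac{2\pi}{p}\im\tau+O(1)$.

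Because $P$ is rational and the level structure is the unordered pair, the set $\{j_1,j_2\}$ is stable under $\gal(\bar\Q/\Q)$, so $j_1+j_2$ and $j_1j_2$ lie in $\Q$; as $j(P)\in\Z$ and $j_1,j_2$ are roots of the monic integral polynomial $\Phi_p(j(P),\cdot)$, they are algebraic integers, whence $j_1+j_2,\,j_1j_2\in\Z$. The crucial consequence of integrality is that at every finite place $v$ one has $|j_1|_v\le1$ and $|j_2|_v\le1$: the point $P$ cannot be $v$-adically close to a cusp for any finite~$v$ (the cusps are the poles of~$j$, and $j(P)$ is a $v$-adic integer), so \emph{all} cuspidal degeneration is concentrated at the archimedean place. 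This is precisely the configuration in which Runge's method yields an effective bound.

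The engine would then be Runge's method, for which I must check that the cusps of $X_{\mathrm{split}}(p)$ fall into at least two orbits under $\gal(\bar\Q/\Q)$ — indeed there are rational cusps as well as cusps defined over $\Q(\zeta_p)$, so with $S=\{\infty\}$ the Runge condition holds. I would build an explicit modular unit $g$ from the modular discriminant, for instance the symmetric (hence $\Q$-rational, and descending to $X_{\mathrm{split}}(p)$) combination $g=\Delta(p\tau)\Delta(\tau/p)/\Delta(\tau)^2$, whose divisor is supported on the cusps and whose $q$-expansion I control through $\log|\Delta(w)|=-2\pi\im w+O(e^{-2\pi\im w})$. Since $P$ is integral, $g(P)$ is a unit away from the primes above~$p$, so the product formula over~$\Q$, namely $\sum_v\log|g(P)|_v=0$, forces the large archimedean term $\log|g(P)|_\infty$ to be balanced only by the contribution at~$p$, which is in turn bounded by the (bounded) order of vanishing of $g$ along the cusps in the special fibre. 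Substituting the $q$-expansions converts this into the sought upper bound for $\im\tau$.

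The main obstacle is obtaining the \emph{sharp} leading constant $2\pi p^{1/2}$ rather than a cruder $2\pi p$ or $c\,p^{1/2}\log p$. This forces one to optimize the Runge inequality by playing the two isogeny directions $\tau\mapsto p\tau$ and $\tau\mapsto\tau/p$ against each other: the bound $2\pi\,\im\tau\le 2\pi p^{1/2}+\cdots$ reflects the geometric mean of the two cusp widths ($1$ and~$p$) attached to a degree-$p$ isogeny, and I would extract it by using the Atkin--Lehner symmetry that swaps $j_1\leftrightarrow j_2$ to replace $\im\tau$ by the balanced quantity governing the fixed locus of that involution. The secondary term $6\log p$ would be tracked through the normalization of $\Delta$ (its weight-$12$, i.e. $24$th-power, factor) and through the field of definition $\Q(\zeta_p)$ of the cyclotomic cusps, while the absolute constant $C$ absorbs the Fourier tails $O(e^{-2\pi\im w})$ and the $O(1)$ in $\log|j|=2\pi\im\tau+O(1)$. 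Making every one of these error terms explicit and uniform in~$p$ — so that $C$ is genuinely absolute and effective — is the delicate computational heart of the argument.
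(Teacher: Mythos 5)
Your overall skeleton coincides with the paper's: reduce to an archimedean estimate at a representative $\tau$, check the Runge condition via the two Galois orbits of cusps, introduce a cuspidal modular unit whose value at an integral point is a rational integer dividing a power of~$p$, and play the arithmetic bound against the unit's asymptotics. Your $\Delta$-quotient is essentially the paper's unit~$U$ in disguise: the paper builds $U$ from Siegel functions $g_\bfa$ over the $G$-invariant set~$A$, and its own Remark notes that this unit can be rewritten via products of $\Delta(nz)$. Your arithmetic half is sound in outline, though ``$g(P)$ is a unit away from the primes above $p$'' needs the two-sided integrality statement (both $g$ and $p^{A}g^{-1}$ integral over $\Z[j]$, proved as in Proposition~\ref{psiu} via Lemma~\ref{lint} by checking integral $q$-expansions of \emph{all} $\Gamma(1)$-translates, not just at the standard cusp); the norm argument of Proposition~\ref{pari} then gives ${0\le\log|U(P)|\le 24p\log p}$. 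The observation that ${j_1+j_2,\,j_1j_2\in\Z}$ is correct but ends up unused: the working input is $j(P)\in\Z$ plus the integrality of the unit.

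The genuine gap is in the analytic heart, in two related places. First, you cannot simultaneously place $\tau$ in the fundamental domain \emph{and} assume the Cartan structure is $\{\langle 1/p\rangle,\langle\tau/p\rangle\}$: normalizing $\tau$ conjugates the structure by some $\beta_c=\left(\begin{smallmatrix}1&0\\c&1\end{smallmatrix}\right)$, and one must estimate $g\circ\beta_c$ for a full set of double-coset representatives ${c\in\{0,\ldots,(p-1)/2\}}$, as in Lemma~\ref{lptau}; the case ${p\nmid c}$ (the point close to a \emph{non-rational} cusp) is precisely the one producing the main term, and your outline never treats it. Second, and fatally for your error accounting: in the critical regime ${\im\tau\asymp p^{1/2}}$ the relevant nome $q^{1/p}$ has modulus $e^{-2\pi\im\tau/p}$ close to~$1$, so your claimed tails $O(e^{-2\pi\im w})$ with $w=\tau/p$, and likewise the expansion ${\log|j_2|=\tfrac{2\pi}{p}\im\tau+O(1)}$, are false there — the sums $\sum_k\log\bigl|1-q^{k/p}e^{2\pi ikb/p}\bigr|$ are not small and cannot be absorbed into~$C$. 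The paper's Lemma~\ref{llogz} bounds them by $\tfrac{\pi^2}{6}\,p/\log|q^{-1}|+O(1)$ using the functional equation of Dedekind's~$\eta$, whence the error $8\pi^2p^2/\log|q^{-1}|$ of Proposition~\ref{pu}; the constant $2\pi p^{1/2}$ arises from balancing this against the threshold ${\log|q^{-1}|\ge 2\pi p^{1/2}+6\log p}$, and the $6\log p$ from dividing the arithmetic bound $24p\log p$ by the small vanishing order $2(p-1)$ of $U$ at the non-rational cusps — not from any ``geometric mean of cusp widths'' or from the normalization of~$\Delta$. Without a substitute for Lemma~\ref{lptau} and Lemma~\ref{llogz}, your argument bounds nothing in the regime the theorem is about.
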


This is proved  in Section~\ref{sprel}  by a variation of the 
method of Runge, after some preparation in Section~\ref{sest} and~\ref{smod}. 
The terms $2\pi p^{1/2}$ and $6\log p$ seem to be optimal for the method. The 
constant~$C$ may probably be replaced by $o(1)$ when~$p$ tends to infinity. 

We apply Theorem~\ref{th1} to the arithmetic 
of elliptic curves. Serre proved~\cite{Se72} that for any elliptic curve~$E$ without complex multiplication 
(CM in the sequel), there exists  ${p_0(E)>0}$ such that for every prime ${p>p_0(E)}$ the natural Galois 
representation 
$$
\rho_{E,p}:\gal
(\bar{\Q} /\Q )\to \GL (E[p])\cong\GL_2(\Z/p\Z)
$$
is surjective. Masser and W\"ustholz~\cite{MW93},  Kraus~\cite{Kr97} and Pellarin~\cite{Pe01} gave effective versions of Serre's 
result; for more recent work, see, for instance, Cojocaru and Hall 
\cite{Co05,CH05}. 

Serre asked whether~$p_0$ can be made independent of~$E$:

\begin{quotation}{\sl
\noindent
does there exist an absolute constant~$p_0$ such that for any non-CM elliptic curve~$E$ 
over~$\Q$ and any prime ${p>p_0}$ the  Galois representation~$\rho_{E,p}$ is 
surjective? }
\end{quotation}
We refer to this as ``Serre's uniformity problem''. The general guess is that $p_{0}=37$ 
would probably do.  

The group $\GL_2(\Z/p\Z)$ has the following types of maximal proper subgroups: 
normalizers of (split and non-split) Cartan subgroups, Borel subgroups, and ``exceptional'' 
subgroups (those whose projective image is isomorphic to one of the groups~$A_4$,~$S_4$ 
or~$A_5$). To solve Serre's uniformity problem, one has to show that for sufficiently 
large~$p$, the image of the Galois representation is not contained in any of the above listed
maximal subgroups. (See~\cite[Section~2]{Ma76} for an excellent introduction into this topic.) Serre himself settled the case of exceptional 
subgroups, and the work  of Mazur~\cite{Ma78} on rational isogenies implies Serre 
uniformity for the Borel subgroups, so to solve Serre's problem we are left with the Cartan 
cases. Equivalently, one would like to prove that, for large~$p$, the only rational
points of the modular curves $X_{\mathrm{split}} (p)$ and $X_{\mathrm{nonsplit}} (p)$ 
are the cusps and CM points, in which case we will say that the rational points are 
\textsl{trivial}.

In the present note we solve  the split Cartan case of Serre's problem. 

\begin{theorem}
\label{tse}
There exists an absolute constant~$p_0$ such that for ${p>p_0}$ every point in $X_{\mathrm {split}}(p)(\Q )$ is either
a CM point or a cusp.
\end{theorem}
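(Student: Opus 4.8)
The plan is to deduce Theorem~\ref{tse} from the integrality bound of Theorem~\ref{th1}. The key observation is that Theorem~\ref{th1} controls only integral points $Y_{\mathrm{split}}(p)(\Z)$, whereas Theorem~\ref{tse} concerns all rational points $X_{\mathrm{split}}(p)(\Q)$. I would therefore first reduce the rational-point problem to an integral-point problem by exploiting the structure of rational points on these modular curves. A rational point $P \in Y_{\mathrm{split}}(p)(\Q)$ corresponds to an elliptic curve $E/\Q$ whose mod-$p$ Galois image lies in the normalizer of a split Cartan subgroup; such a curve has a rational cyclic $p$-isogeny up to the anti-diagonal involution, and the associated $j$-invariant $j(P)$ is a rational number.

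Let me think about how to force integrality. The standard device is that for a non-CM, non-cuspidal rational point, the corresponding $j$-invariant $j(E)$, if it failed to be an algebraic integer, would have a prime $\ell$ in its denominator, meaning $E$ has potentially multiplicative reduction at $\ell$. Here I would invoke the theory of the reduction of Galois representations at such primes: at a prime of potentially multiplicative reduction, the image of inertia (and hence of the decomposition group) is, up to conjugation, contained in a Borel subgroup, governed by the Tate parameter via the action on the $p$-torsion of the Tate curve. The incompatibility I want to extract is that membership in the normalizer of a split Cartan forces strong constraints on how the Galois image can simultaneously be Borel-like at $\ell$ and split-Cartan-like globally. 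Concretely, one shows that the power of $\ell$ dividing the denominator of $j(P)$ must be divisible by $p$, so $\log|j(P)|$ (or more precisely a suitable height) grows at least like $\log p$ times the contribution from each bad prime — and this can be leveraged together with the archimedean bound of Theorem~\ref{th1} to corner the point.

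The cleanest route I would pursue is to show that for $p$ large the rational points are actually \emph{integral}, reducing Theorem~\ref{tse} to Theorem~\ref{th1}. The mechanism: if $P \in Y_{\mathrm{split}}(p)(\Q)$ and $j(P) \notin \Z$, factor the denominator and use that at each bad prime $\ell$ the local Galois image being in the normalizer of the split Cartan pins down $\ord_\ell(j(P))$ to be a multiple of $p$ in absolute value (the Tate parameter $q$ satisfies $\ord_\ell j = -\ord_\ell q$, and the split-Cartan condition forces $p \mid \ord_\ell q$). Summing over bad primes then gives $\log|j(P)| \ge p\log\ell \ge p\log 2$ for some contributing prime, or a comparable bound on the full height. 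Comparing with Theorem~\ref{th1}, which caps $\log|j(P)|$ at $2\pi p^{1/2}+6\log p + C$, yields a contradiction once $p\log 2 > 2\pi p^{1/2}+6\log p + C$, i.e.\ for all $p$ beyond an absolute effective threshold, since the left side grows linearly in $p$ and the right side only like $p^{1/2}$. Hence such $P$ must be integral, Theorem~\ref{th1} applies directly, and the finitely many remaining $p$ below the threshold (each giving a curve of genus that can be handled by other means or absorbed into $p_0$) are dealt with by enlarging $p_0$.

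The main obstacle I anticipate is the local analysis at primes of bad reduction: making precise the claim that the normalizer-of-split-Cartan condition forces $p \mid \ord_\ell j(P)$. This requires carefully distinguishing the diagonal (split Cartan proper) case from the anti-diagonal coset, and checking that the action of inertia at $\ell$ on the Tate module — which lands in a Borel — is compatible with lying in the normalizer only when the ramification index of the isogeny character is divisible by $p$. A subtlety is that $\ell=p$ must be treated separately (potentially multiplicative reduction at $p$ itself interacts with the $p$-torsion differently), and one must also verify that CM points and cusps are correctly excluded or accounted for rather than spuriously producing integral points. Once this local divisibility is established, the global comparison with Theorem~\ref{th1} is the easy and decisive step.
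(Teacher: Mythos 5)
There is a genuine gap — in fact two — in your argument. The decisive one is in your ``comparison'' step. Theorem~\ref{th1} bounds $\log|j(P)|$ \emph{only} for ${P\in Y_{\mathrm{split}}(p)(\Z)}$; it says nothing whatsoever about non-integral rational points. Your local analysis is correct as far as it goes: the normalizer of a split Cartan subgroup has order $2(p-1)^2$, prime to~$p$, so at a prime~$\ell$ of potentially multiplicative reduction the image of inertia (acting through the Tate parametrization) cannot contain a unipotent element of order~$p$, which forces ${p\mid \ord_\ell\, j(P)}$. But this only shows that a hypothetical non-integral point has height at least $p\log 2$ — a \emph{lower} bound on the height of a \emph{non-integral} point, whereas the Runge bound is an \emph{upper} bound valid only for \emph{integral} points. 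The two bounds never apply to the same point, so no contradiction materializes and integrality does not follow. Integrality of rational points on $X_{\mathrm{split}}(p)$ is in fact a deep theorem — Theorem~\ref{tmmm1} of Mazur, Momose and Merel, reproved in the paper's Appendix via Mazur's results on $J_0(p)(\Q)_\tors$, the Eisenstein quotient, and a formal-immersion argument at the rational cusp. The Tate-curve inertia computation can constrain the exponent of~$\ell$ in the denominator of $j(P)$, but it cannot eliminate the denominator.

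Even granting integrality for free, your proof still does not close: applying Theorem~\ref{th1} ``directly'' to an integral point yields only the cap ${\log|j(P)|\le 2\pi p^{1/2}+6\log p+C}$, which is perfectly compatible with the existence of non-CM integral points with small $j$-invariant; nothing in your text rules them out. The missing ingredient — and the arithmetic heart of the paper's proof — is the complementary \emph{lower} bound for non-CM points, Proposition~\ref{pubo}: ${\height(j_E)\ge \kappa p}$. This is obtained from the Masser--W\"ustholz--Pellarin isogeny estimate (Corollary~\ref{cpel}, with Pellarin's exponent~$2$ being essential) applied to the cyclic $p^2$-isogeny ${E_1\to E\to E_2}$ between the quotients of~$E$ by its two independent $p$-isogenies over the quadratic field~$K$, followed by Faltings' and Silverman's height comparisons to transfer the bound from the~$E_i$ back to~$E$. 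It is the confrontation of ${\kappa p\le 2\pi p^{1/2}+6\log p+C}$ — linear versus square-root growth in~$p$ — that forces ${p\le p_0}$. Your inequality ${p\log 2> 2\pi p^{1/2}+6\log p+C}$ is meant to play this role, but as explained it compares bounds that never hold simultaneously for the same point, and it leaves the integral non-CM points, the actual target of the theorem, entirely untouched.
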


In other words, for any non-CM elliptic curve~$E$ 
over~$\Q$ and any prime ${p>p_0}$ the image of the Galois representation~$\rho_{E,p}$ is not contained in 
the normalizer of a split Cartan subgroup.

Several partial results in this direction were available before. In \cite{Pa05,Re08} it 
was proved, by very different techniques, that $X_{\mathrm{split}}(p)(\Q)$ is trivial for a 
(large) positive density of primes; but the methods of loc. cit. have failed to prevent a 
complementary set of primes from escaping them. In \cite{BP08} we allowed ourselves
to consider Cartan structures modulo higher powers of primes, and showed that, assuming the
Generalized Riemann Hypothesis, $X_{\mathrm{split}}(p^5)(\Q)$ is trivial for large 
enough $p$. 

Regarding possible generalizations, note that the Runge's method applies to the study of integral points on an affine curve~$Y$, defined over~$\Q$, if the following  \textsl{Runge condition} is satisfied:
$$
\text{$\gal(\bar\Q/\Q)$ acts non-transitively on the set ${X\setminus Y}$,}
\eqno(R)
$$
where~$X$ is the projectivization of~$Y$.    
The Runge     condition is satisfied for the curve
$X_{\mathrm{split}}(p)$ because it has two Galois orbits of cusps over~$\Q$. Runge's method 
also applies to other modular curves such as $X_0(p)$, but, unfortunately, it does not work (under the
form we use) with $X_{\mathrm {nonsplit}}(p)$, because all cusps of this curve 
are conjugate over~$\Q$ and the Runge condition fails. Moreover, we need
a weak version of Mazur's method to obtain integrality of rational points, and 
this is believed not to apply to $X_{\mathrm {nonsplit}}(p)$ (see \cite{Ch04}). Several other 
applications of our techniques are however possible, and at present we work on applying Runge's 
method to general modular curves over general number fields, see~\cite{BP08,BP10}.

\paragraph{Acknowledgments}
We thank Daniel Bertrand, Imin Chen, Henri Cohen,  Bas Edixhoven, Lo\"ic 
Merel, Joseph Oesterl\'e, Federico Pellarin,  Vinayak Vatsal and Yuri 
Zarhin for stimulating discussions and useful suggestions.

\paragraph{Convention}
Everywhere in this article the $O(\cdot)$-notation, as well as the Vinogradov notation ``$\ll$'' implies absolute effective constants.

\section{Siegel Functions}
\label{sest}

As above, we denote by~$\HH$ the Poincar\'e upper half-plane and put ${\bar\HH=\HH\cup\Q\cup\{i\infty\}}$. 
For ${\tau\in \HH}$ we, as usual,  put ${q=q_\tau=e^{2\pi i\tau}}$. For a rational number~$a$ we define ${q^a=e^{2\pi i a\tau}}$. 
Let ${\bfa=(a_1,a_2)\in \Q^2}$ be such that ${\bfa\notin \Z^2}$, and let ${g_\bfa:\HH\to \C}$ be the corresponding \textsl{Siegel function} 
\cite[Section~2.1]{KL81}. Then we have the following infinite product presentation for~$g_\bfa$ \cite[page~29]{KL81}:
\begin{equation}
\label{epga}
g_\bfa(\tau)= -q^{B_2(a_1)/2}e^{\pi ia_2(a_1-1)}\prod_{n=0}^\infty\left(1-q^{n+a_1}e^{2\pi ia_2}\right)\left(1-q^{n+1-a_1}e^{-2\pi i a_2}\right),
\end{equation}
where ${B_2(T)=T^2-T+1/6}$ is the second Bernoulli polynomial. 
We also have \cite[pages 27--30]{KL81} the relations
\begin{align}
\label{eperga}
g_\bfa\circ\gamma &=g_{\bfa\gamma} \cdot(\text{a root of unity}) \quad \text{for} \quad \gamma\in\SL_2(\Z),\\
\label{eaa'}
g_\bfa&=g_{\bfa'}\cdot(\text{a root of unity})
 \quad \text{when} \quad \bfa\equiv\bfa'\mod\Z^2.
\end{align}
Remark that the root of unity in~(\ref{eperga}) is of order dividing~$12$, and  in~(\ref{eaa'}) 
of order dividing $2N$, where~$N$ is the denominator of~$\bfa$ (the common denominator of~$a_1$ and~$a_2$). 
(For~(\ref{eperga}) use properties \textbf{K 0} and \textbf{K 1} of loc.\ cit., and for~(\ref{eaa'}) use \textbf{K 3} and the fact 
that $\Delta$ is modular of weight 12.)  
Moreover, 
\begin{equation}
g_\bfa\circ\gamma =g_{\bfa} \cdot(\text{a root of unity}) \quad \text{for} \quad \gamma\in\Gamma(N),
\end{equation}
the root of unity being of order dividing~$12N$, because $g_{\bfa}^{12N}$ is a modular function on 
$\Gamma (N)$ by Theorem~1.2 from \cite[page~31]{KL81}.

The following is immediate from~(\ref{epga}). 

\begin{proposition}
\label{pga}
Assume that ${0\le a_1<1}$. Then for ${\tau\in \HH}$ satisfying  ${|q_\tau|\le 0.1}$ we have
$$
\log \left|g_\bfa(\tau)\right|=\frac12B_2(a_1)\log|q|+ \log\left|1-q^{a_1}e^{2\pi ia_2}\right|+
\log\left|1-q^{1-a_1}e^{-2\pi ia_2}\right|+O(|q|)
$$
(where we recall that, all through this article, the notation $O(\cdot)$ as well as $\ll$ imply absolute 
effective constants).
\end{proposition}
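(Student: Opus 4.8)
The plan is to extract the dominant behavior from the infinite product formula (\ref{epga}) and bound the tail. Starting from
$$
g_\bfa(\tau)= -q^{B_2(a_1)/2}e^{\pi ia_2(a_1-1)}\prod_{n=0}^\infty\left(1-q^{n+a_1}e^{2\pi ia_2}\right)\left(1-q^{n+1-a_1}e^{-2\pi i a_2}\right),
$$
I would take the logarithm of the absolute value. The prefactor $-q^{B_2(a_1)/2}e^{\pi ia_2(a_1-1)}$ contributes $\frac12 B_2(a_1)\log|q|$ to $\log|g_\bfa(\tau)|$, since the root of unity $-e^{\pi ia_2(a_1-1)}$ has modulus~$1$ (here $a_1,a_2$ are real because $\bfa\in\Q^2$). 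The two factors of the product with ${n=0}$ give exactly $\log\left|1-q^{a_1}e^{2\pi ia_2}\right|$ and $\log\left|1-q^{1-a_1}e^{-2\pi ia_2}\right|$. So the content of the proposition is that every remaining factor, namely all terms with ${n\ge 1}$ in the first product and all ${n\ge 0}$ terms beyond the displayed one (equivalently ${n\ge 1}$) in the second, contributes only $O(|q|)$ in total.

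Next I would estimate that tail. Using the elementary bound ${\bigl|\log|1-z|\bigr|\ll |z|}$ valid for ${|z|\le 1/2}$, each tail factor is controlled by the modulus of its argument. The hypothesis ${0\le a_1<1}$ guarantees that the exponents ${n+a_1}$ (for ${n\ge1}$) and ${n+1-a_1}$ (for ${n\ge1}$) are all ${\ge 1}$, so ${|q^{n+a_1}e^{2\pi ia_2}|=|q|^{n+a_1}\le |q|^{n}}$ and ${|q^{n+1-a_1}e^{-2\pi ia_2}|\le |q|^{n}}$ for these tail indices. Summing a geometric series, the total tail contribution is ${\ll \sum_{n\ge 1}|q|^{n}=|q|/(1-|q|)}$. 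The condition ${|q_\tau|\le 0.1}$ ensures both that ${1-|q|}$ is bounded away from zero (so the geometric sum is ${\ll |q|}$) and that each tail argument has modulus below~$1/2$ so the logarithm estimate applies; it also keeps the two displayed factors' arguments inside the disk where everything converges nicely.

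The only mildly delicate point is the bookkeeping of which factor is ``displayed'' versus ``tail'': in the first product the displayed term is ${n=0}$, giving ${q^{a_1}}$, and in the second product the displayed term is also ${n=0}$, giving ${q^{1-a_1}}$. So the tail in both products runs over ${n\ge 1}$, and there the exponents are bounded below by~$1$ uniformly, which is exactly what makes the geometric bound work regardless of the precise value of ${a_1\in[0,1)}$. I would then collect the estimates to obtain
$$
\log \left|g_\bfa(\tau)\right|=\frac12B_2(a_1)\log|q|+ \log\left|1-q^{a_1}e^{2\pi ia_2}\right|+
\log\left|1-q^{1-a_1}e^{-2\pi ia_2}\right|+O(|q|),
$$
with an absolute effective implied constant. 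I do not anticipate a real obstacle here; the main thing to get right is the uniformity of the tail bound across all admissible ${a_1}$, which the restriction ${0\le a_1<1}$ handles cleanly.
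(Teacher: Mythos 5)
Your proof is correct and is precisely the argument the paper has in mind when it calls the proposition ``immediate from~(\ref{epga})'': isolate the prefactor and the two ${n=0}$ factors, then bound the ${n\ge1}$ tail via ${\bigl|\log|1-z|\bigr|\ll|z|}$ for ${|z|\le1/2}$ and a geometric series, using ${0\le a_1<1}$ to get exponents ${\ge1}$ uniformly. Nothing further is needed.
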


For ${\bfa\in \Q^2\setminus\Z^2}$ Siegel's function~$g_\bfa$  is algebraic over the field $\C(j)$: this again follows from the fact that $g_\bfa^{12N}$ is $\Gamma(N)$-automorphic, where, as above,~$N$ is the denominator of~$\bfa$. Since~$g_\bfa$ 
is holomorphic and does not vanish on the upper half-plane~$\HH$ (again by Theorem~1.2 of loc.\ cit.), both~$g_\bfa$ and $g_\bfa^{-1}$  must be integral over the ring $\C[j]$. Actually, a stronger 
assertion holds.

\begin{proposition}
\label{psiu}
Both~$g_\bfa$ and   ${\left(1-\zeta_N\right)g_\bfa^{-1}}$ are integral over  $\Z[j]$. Here~$N$ is the
denominator of~$\bfa$  and~$\zeta_N$ is a primitive $N$-th root of unity. 
\end{proposition}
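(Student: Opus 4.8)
The plan is to produce, for each of the two functions, a monic polynomial with coefficients in $\overline{\Z}[j]$ (the polynomials in $j$ with algebraic-integer coefficients) having it as a root, and then to deduce integrality over $\Z[j]$ by transitivity of integral dependence: every element $\sum_k\alpha_k j^k$ of $\overline{\Z}[j]$ is integral over $\Z[j]$, since each $\alpha_k$ is integral over $\Z\subseteq\Z[j]$, $j\in\Z[j]$, and the elements integral over $\Z[j]$ form a ring. By~(\ref{eaa'}) I may first assume $0\le a_1<1$, as passing to an equivalent vector only multiplies $g_\bfa$ by a root of unity, which is an algebraic-integer unit and affects neither integrality nor the corrective factor $1-\zeta_N$.

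First I would treat $g_\bfa$. The set $S=\{g_\bfa\circ\gamma:\gamma\in\SL_2(\Z)\}$ is finite and permuted by $\SL_2(\Z)$: by~(\ref{eperga}) and~(\ref{eaa'}) each $g_\bfa\circ\gamma$ equals $g_\bfb$ times a root of unity, where $\bfb\equiv\bfa\gamma\pmod{\Z^2}$ still has denominator $N$ (since $\gamma$ permutes $\tfrac1N\Z^2/\Z^2$), leaving finitely many possibilities. Hence the coefficients of $\Phi(X)=\prod_{f\in S}(X-f)$ are genuinely $\SL_2(\Z)$-invariant and holomorphic on $\HH$. By~(\ref{epga}) each $f\in S$ has a Fourier expansion in $q^{1/N}$ with only finitely many negative terms, so these coefficients are meromorphic at the cusps; being $\SL_2(\Z)$-invariant they have honest $q$-expansions, and a modular function for $\SL_2(\Z)$ that is holomorphic on $\HH$ and meromorphic at $\infty$ is a polynomial in $j$. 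Moreover, by~(\ref{epga}), the $q^{1/N}$-coefficients of each $f$ are algebraic integers (a root of unity times a power of $q$, times factors $1-q^{n+b_1}e^{\pm 2\pi ib_2}$ with algebraic-integer coefficients), so the same holds for the $q$-coefficients of the symmetric functions; matching $j=q^{-1}+O(1)$ and peeling off leading terms shows these polynomials in $j$ lie in $\overline{\Z}[j]$. Thus $\Phi\in\overline{\Z}[j][X]$ is monic with root $g_\bfa$, and $g_\bfa$ is integral over $\Z[j]$.

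For $(1-\zeta_N)g_\bfa^{-1}$ I would run the same argument with the orbit $\{(1-\zeta_N)f^{-1}:f\in S\}$, again finite, holomorphic on $\HH$, and permuted by $\SL_2(\Z)$. The only delicate point is the algebraic-integrality of the $q^{1/N}$-coefficients of $f^{-1}$. Writing $f=g_\bfb\cdot(\text{root of unity})$ with $0\le b_1<1$, every factor $(1-q^{n+b_1}e^{2\pi ib_2})^{-1}$ and $(1-q^{n+1-b_1}e^{-2\pi ib_2})^{-1}$ carrying a positive power of $q$ expands into a power series with algebraic-integer coefficients; the sole exception is the factor $(1-e^{2\pi ib_2})^{-1}$, which occurs exactly when $b_1=0$. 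But then the denominator $N$ of $\bfb$ is carried entirely by $b_2$, so $e^{2\pi ib_2}$ is a \emph{primitive} $N$-th root of unity, whence $(1-\zeta_N)/(1-e^{2\pi ib_2})$ is a cyclotomic unit, in particular an algebraic integer. Consequently each $(1-\zeta_N)f^{-1}$ has algebraic-integer $q^{1/N}$-coefficients, the orbit product $\Psi(X)=\prod_{f\in S}\bigl(X-(1-\zeta_N)f^{-1}\bigr)$ lies in $\overline{\Z}[j][X]$ by the same reasoning, and $(1-\zeta_N)g_\bfa^{-1}$ is integral over $\Z[j]$.

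I expect the main obstacle to be the uniform denominator control for the inverse: one must check that the single factor $1-\zeta_N$ clears the denominators of \emph{all} conjugates $f^{-1}$ at once. This rests on two facts I would isolate beforehand — that $\SL_2(\Z)$ preserves the denominator of $\bfa$, so the offending factor always involves a primitive $N$-th root of unity, and that $1-\zeta_N$ and $1-\zeta_N^{k}$ are associate in $\Z[\zeta_N]$ whenever $\gcd(k,N)=1$. Everything else is routine bookkeeping with the product formula~(\ref{epga}) and the standard fact that an $\SL_2(\Z)$-invariant function holomorphic on $\HH$ with algebraic-integer Fourier coefficients is a polynomial in $j$ over $\overline{\Z}$.
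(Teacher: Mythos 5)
Your proof is correct and follows essentially the same route as the paper: your orbit-product step is exactly the paper's Lemma~\ref{lint} (the coefficients of $\prod(T-f\circ\gamma)$ are $\SL_2(\Z)$-invariant, holomorphic on~$\HH$, with algebraically integral $q$-expansions, hence lie in $\bar\Z[j]$, and one concludes by transitivity of integrality), and your analysis of the exceptional factor $1-e^{2\pi i b_2}$, using that $\SL_2(\Z)$ preserves the exact denominator of~$\bfa$ so that $(1-\zeta_N)/(1-e^{2\pi i b_2})$ is a cyclotomic unit, is precisely the paper's. The only cosmetic deviation is that the paper applies its lemma to the honestly $\Gamma(N)$-automorphic powers $g_\bfa^{12N}$ and $\bigl((1-\zeta_N)g_\bfa^{-1}\bigr)^{12N}$, whereas you work with the functions themselves; this is harmless, although the individual expansions are then in powers of $q^{1/(12N^2)}$ (because of the prefactor $q^{B_2(b_1)/2}$) rather than $q^{1/N}$ as you assert, which changes nothing since invariance under $\tau\mapsto\tau+1$ forces the symmetric functions' expansions into integer powers of~$q$ anyway.
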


This  is, essentially, established in~\cite{KL81}, but is not stated explicitly therein. Therefore we briefly indicate the proof here. 
A holomorphic and $\Gamma(N)$-automorphic function ${f:\HH\to \C}$  admits the infinite $q$-expansion
\begin{equation}
\label{eexpan}
f(\tau)= \sum_{k\in \Z}a_kq^{k/N}.
\end{equation}
We call the $q$-series~(\ref{eexpan}) \textsl{algebraic integral} if the following two conditions 
are satisfied: the negative part of~(\ref{eexpan}) has only finitely 
many terms (that is, ${a_k=0}$ for large negative~$k$), and the  coefficients~$a_k$ are algebraic 
integers. Algebraic integral $q$-series form a ring. The invertible elements of this ring are 
$q$-series with invertible leading coefficient. By the \textsl{leading coefficient} of an 
algebraic integral $q$-series we mean~$a_m$, where  ${m\in \Z}$ is defined by ${a_m\ne 
0}$, but ${a_k=0}$ for ${k<m}$.

\begin{lemma}
\label{lint}
Let~$f$ be a $\Gamma(N)$-automorphic function  regular on~$\HH$ such that for every ${\gamma \in \Gamma
(1)}$ the $q$-expansion of ${f\circ \gamma}$ is algebraic integral. Then~$f$ is integral 
over $\Z[j]$. 
\end{lemma}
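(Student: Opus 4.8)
The plan is to exhibit a monic polynomial having $f$ as a root whose coefficients lie in $\Z[j]$. Since $f$ is invariant under $\Gamma(N)$, one checks that $f\circ\gamma$ depends only on the right coset $\Gamma(N)\gamma$, so the set $\{f\circ\gamma:\gamma\in\Gamma(1)\}$ is finite and indexed by $\Gamma(N)\backslash\Gamma(1)$. I would form
$$
P(X)=\prod_{\gamma\in\Gamma(N)\backslash\Gamma(1)}\bigl(X-f\circ\gamma\bigr),
$$
a monic polynomial whose coefficients are the elementary symmetric functions of the $f\circ\gamma$ and which has $f=f\circ\mathrm{id}$ among its roots. The goal reduces to showing that each coefficient of $P$ lies in $\overline\Z[j]$, where $\overline\Z$ denotes the ring of algebraic integers; a final transitivity argument then recovers $\Z[j]$.

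First I would verify that each coefficient $c$ of $P$ is $\Gamma(1)$-automorphic, holomorphic on $\HH$, and has algebraic integral $q$-expansion. Invariance under $\Gamma(1)$ holds because right multiplication by any $\delta\in\Gamma(1)$ permutes the cosets $\Gamma(N)\gamma$, hence permutes the roots $f\circ\gamma$ and fixes their symmetric functions. Holomorphy on $\HH$ is immediate, since each $\gamma$ acts biholomorphically on $\HH$ and $f$ is regular there. For the expansion, each $f\circ\gamma$ is algebraic integral by hypothesis, and such $q$-series form a ring, so the polynomial expressions $c$ are again algebraic integral; moreover $\Gamma(1)$-invariance (periodicity under $\tau\mapsto\tau+1$) forces the expansion of $c$ to involve only integer powers of $q$.

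The crux is the sub-claim that a $\Gamma(1)$-automorphic function $g$, holomorphic on $\HH$ and with algebraic integral $q$-expansion, lies in $\overline\Z[j]$. I would prove this by induction on the pole order $m$ of $g$ at the cusp: writing $g=\sum_{k\ge-m}b_kq^k$ with $b_k\in\overline\Z$ and $b_{-m}\ne0$, one uses that $j=q^{-1}+744+\cdots$ has integer coefficients and leading term $q^{-1}$, so that $j^m=q^{-m}+\cdots$ and $g-b_{-m}j^m$ is again $\Gamma(1)$-automorphic, holomorphic on $\HH$, algebraic integral, and of pole order at most $m-1$; the base case $m\le0$ gives a holomorphic modular function on $\Gamma(1)$, necessarily the constant $b_0\in\overline\Z$. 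Applying this to each coefficient shows $P\in\overline\Z[j][X]$, so $f$ is integral over $\overline\Z[j]$. Finally, since $\overline\Z$ is integral over $\Z$, the ring $\overline\Z[j]$ is integral over $\Z[j]$, and transitivity of integrality yields that $f$ is integral over $\Z[j]$. The only genuinely delicate point is the bookkeeping in the sub-claim, ensuring that killing the leading pole by subtracting $b_{-m}j^m$ preserves both $\Gamma(1)$-invariance and the integrality of the expansion coefficients; everything else is formal.
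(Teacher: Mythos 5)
Your proof is correct and takes essentially the same route as the paper's: you form the monic polynomial $\prod(X-f\circ\gamma)$ over the finitely many conjugates, check that its coefficients are $\Gamma(1)$-automorphic, holomorphic on $\HH$, and have algebraic integral $q$-expansions, deduce that they lie in $\bar\Z[j]$, and conclude that $f$ is integral over $\bar\Z[j]$ and hence over $\Z[j]$ by transitivity. The only difference is that you make explicit, via the induction on the pole order at the cusp (subtracting $b_{-m}j^m$), the step the paper asserts in a single line — that a $\Gamma(1)$-automorphic function, holomorphic on $\HH$ with algebraic integral $q$-expansion, belongs to $\bar\Z[j]$ — which is a fleshing-out rather than a different argument.
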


\begin{proof}
This is, essentially, Lemma~2.1 from \cite[Section~2.2]{KL81}. Since~$f$ is 
$\Gamma(N)$-automorphic, the set ${\{f\circ\gamma :\gamma\in \Gamma(1)\}}$ is finite. 
The coefficients of the polynomial ${F(T)=\prod(T-f\circ\gamma)}$ (where the product is 
taken over the finite set above)  are $\Gamma(1)$-automorphic functions  with algebraic 
integral $q$-expansions. Since they have no pole on~$\HH$, they belong to $\C[j]$ and even to
$\bar\Z[j]$, where~$\bar\Z$ is the ring of all algebraic integers, because the coefficients of their $q$-expansions are algebraic integers.  It follows that~$f$ is 
integral over~$\bar\Z[j]$, hence over $\Z[j]$. \qed
\end{proof}

\paragraph*{Proof of Proposition~\ref{psiu}}
The function $g_\bfa^{12N}$ is automorphic of level~$N$  and its 
$q$-expansion is algebraic integral (as one can easily see by transforming the infinite 
product~(\ref{epga}) into an infinite series).  By~(\ref{eperga}), the same is true for for 
every ${(g_\bfa\circ\gamma)^{12N}}$. Lemma~\ref{lint} now implies that $g_\bfa^{12N}$ is integral 
over $\Z[j]$, and so is~$g_\bfa$. 

Further, the  $q$-expansion of~$g_\bfa$ is invertible if ${a_1\notin \Z}$ and is 
${1-e^{\pm2\pi i a_2}}$ times an invertible $q$-series if ${a_1\in \Z}$. Hence the $q$-expansion 
of~$g_\bfa^{-1}$ is algebraic integral when ${a_1\notin \Z}$, and if ${a_1\in \Z}$ the 
same is true for ${\left(1-e^{\pm2\pi i a_2}\right)g_\bfa^{-1}}$. In the latter case~$N$ is the exact 
denominator of~$a_2$, which implies that ${(1-\zeta_N)/\left(1-e^{\pm2\pi i a_2}\right)}$ is 
an algebraic unit. Hence, in any case, ${(1-\zeta_N) g_\bfa^{-1}}$ has algebraic integral 
$q$-expansion, and the same is true with~$g_\bfa$ replaced by ${g_\bfa\circ\gamma}$ for
any ${\gamma\in \Gamma(1)}$ (we again use~(\ref{eperga}) and notice that~$\bfa$ and 
$\bfa\gamma$ have the same order in $(\Q/\Z)^2$). Applying Lemma~\ref{lint} to the function
${\left((1-\zeta_N) g_\bfa^{-1}\right)^{12N}}$,   we 
complete the proof.\qed

\section{A Modular Unit}
\label{smod}

In this section we define a special ``modular unit'' (in the spirit of \cite{KL81}) and study its asymptotic behavior at infinity. With the 
common abuse of speech, the modular invariant~$j$, as well as the other modular functions used below, may be viewed, depending on the context, 
as either automorphic functions on the Poincar\'e upper half-plane, or rational functions on the corresponding modular curves. 
 
Since the root of unity in~(\ref{eaa'}) are of order dividing ${2N}$, where~$N$ is a  denominator of~$\bfa$, the function 
$g_\bfa^{12N}$ will be well-defined  if we select~$\bfa$ in the set ${\left(N^{-1}\Z/\Z\right)^2}$.  
Thus,  fix a positive integer~$N$ and for a non-zero element~$\bfa$  of ${(N^{-1}\Z/\Z)^2}$ put ${u_\bfa=g_\bfa^{12N}}$. 
After fixing a choice for $\zeta_{N}$ in $\C$ (for instance $\zeta_{N} =e^{2i\pi /N}$), the analytic modular curve $X(N)(\C ):=
{\bar\HH}/\Gamma (N)$ has a modular model 
over $\Q (\zeta_{N})$, parameterizing isomorphism classes of generalized elliptic curves endowed with a basis of their $N$-torsion 
with determinant 1. As already noticed, the function~$u_\bfa$ is $\Gamma(N)$-automorphic and hence defines a rational function on the modular curve $X(N)(\C )$; in fact, it 
belongs to the field $\Q(\zeta_N)\bigl(X(N)\bigr)$. The Galois group of the latter field over $\Q(j)$  is isomorphic to 
$\GL_2(\Z/N\Z)/\{ \pm 1\}$, and we may identify the two groups to make the Galois action compatible with  the natural action 
of $\GL_2(\Z/N\Z)$ on $(N^{-1}\Z/\Z)^2$ in the following sense: for any ${\bar\sigma\in \gal\bigl(\Q\bigl(X(N)\bigr)\big/\Q(j)
\bigr)=\GL_2 (\Z/N\Z)} /\{ \pm 1\}$ and any non-zero ${\bfa\in (N^{-1}\Z/\Z)^2}$  we have ${u_\bfa^{\bar\sigma}=u_{\bfa\sigma}}$, where ${\sigma \in \GL_2(\Z/N\Z)}$ is a pull-back of~$\bar\sigma$. Notice that ${u_\bfa =u_{-\bfa}}$, which  follows from~(\ref{eperga}). 
For the proof of the statements above the reader may consult \cite[pp.\ 31--36]{KL81}, and especially Theorem~1.2,  Proposition~1.3 and the beginning of Section~2.2
therein.

From now on we assume that ${N=p\ge 3}$ is an odd prime number,  and that~$G$ is the normalizer of the diagonal subgroup of ${\GL_2(\F_p)}$. In this case there are two Galois orbits of cusps over~$\Q$, the first being the cusp at infinity, which is $\Q$-rational (we denote it by~$\infty$), and the second consisting of the  ${(p-1)/2}$ other cusps (denoted by ${P_1, \ldots, P_{(p-1)/2}}$),  which are defined over the real cyclotomic field $\Q(\zeta_p)^+$. According to the theorem of Manin-Drinfeld, there exists ${U\in \Q(X_G)}$ such that the principal divisor~$(U)$ is of the form ${m\bigl((p-1)/2\cdot \infty - (P_1+\cdots+P_{(p-1)/2})\bigr)}$ with some positive integer~$m$. Below we use Siegel's functions to find  such~$U$ explicitly with ${m=2p(p-1)}$.

\begin{remark}
\begin{enumerate}
\item
The general form of units we build is more ripe for generalization, but in the present case, using the 
$\Q$-isomorphism between $X_\spl (p)$ and $X_0 (p^2 )/w_p$, our unit could probably be expressed in terms 
of (products of) modular forms of shape $\Delta (nz)$.

\item
The assumption ${p\ge 3}$ is purely technical: the contents of this section extends, with insignificant changes, to ${p=2}$. 
\end{enumerate}
\end{remark}

Denote by $p^{-1}\F_p^\times$ the set of non-zero elements of ${p^{-1}\Z/\Z}$. Then the set 
$$
A= \bigl\{(a,0) : a\in p^{-1}\F_p^\times\bigr\}\cup \bigl\{(0,a) : a\in p^{-1}\F_p^\times\bigr\}
$$
is $G$-invariant. Hence the function 
$$
U=\prod_{\bfa\in A}u_\bfa
$$
belongs to the field $\Q(X_G)$. In particular, viewed as a function on~$\HH$, it is $\Gamma$-automorphic, where~$\Gamma$ is the pullback to $\Gamma(1)$ of ${G\cap \SL_2(\F_p)}$. 

More generally, for ${c\in \Z}$ put 
$$
\beta_c=\begin{pmatrix}1&0\\c&1\end{pmatrix}, \qquad U_c =U\circ \beta_c= \prod_{\bfa\in A\beta_c}u_\bfa
$$
(so that ${U=U_0}$).   

Let~$D$ be the familiar fundamental 
domain of $\SL_2(\Z)$ (that is, the hyperbolic triangle with vertices $e^{\pi i/3}$, 
$e^{2\pi i/3}$ and $i\infty$, together with the geodesic segments ${[i,e^{2\pi i/3}]}$ 
and ${[e^{2\pi i/3},i\infty]}$)  and ${D+\Z}$ the union of all translates of~$D$ by the rational integers. Recall also that~$j$ denotes the modular invariant.

\begin{lemma}
\label{lptau}
For any ${P\in Y_G(\C)}$ there exists ${c\in \Z}$ (in fact, even  ${c\in\{0, \ldots, (p-1)/2\}}$) and ${\tau\in D+\Z}$ such that ${j(\tau)=j(P)}$ and ${U_c(\tau)=U(P)}$. 
\end{lemma}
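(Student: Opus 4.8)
The plan is to lift $P$ to the upper half-plane, move the lift into $D+\Z$ by an element of $\SL_2(\Z)$, and then choose~$c$ so that the twisted unit $U_c$ records the correct value. First I would pick any ${\tau_0\in\HH}$ representing~$P$ under the uniformization ${Y_G(\C)=\Gamma\backslash\HH}$. Since~$j$ is $\SL_2(\Z)$-invariant and~$D$ is a fundamental domain, there is ${\gamma\in\SL_2(\Z)}$ with ${\tau:=\gamma\tau_0\in D\subseteq D+\Z}$, and then ${j(\tau)=j(\tau_0)=j(P)}$; this settles the first of the two required equalities. Because~$U$ is $\Gamma$-automorphic we also have ${U(P)=U(\tau_0)=U(\gamma^{-1}\tau)=(U\circ\gamma^{-1})(\tau)}$, so it remains to arrange ${U\circ\gamma^{-1}=U_c}$ for a suitable~$c$.

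Next I would compute $U\circ\gamma^{-1}$ using the transformation law~(\ref{eperga}). Since ${u_\bfa=g_\bfa^{12p}}$ and the root of unity in~(\ref{eperga}) has order dividing~$12$, raising to the $12p$-th power gives ${u_\bfa\circ\delta=u_{\bfa\delta}}$ exactly for every ${\delta\in\SL_2(\Z)}$; hence ${U\circ\gamma^{-1}=\prod_{\bfa\in A}u_{\bfa\gamma^{-1}}=\prod_{\bfb\in A\gamma^{-1}}u_\bfb}$, and likewise ${U_c=\prod_{\bfb\in A\beta_c}u_\bfb}$. As ${u_\bfb=u_{-\bfb}}$, the identity ${U\circ\gamma^{-1}=U_c}$ will hold as soon as ${A\gamma^{-1}=A\beta_c}$ as subsets of $(p^{-1}\F_p)^2$. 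Now~$A$ consists of the nonzero points on the two coordinate axes modulo~$p$, and its right stabilizer in $\GL_2(\F_p)$ is precisely the image~$\bar G$ of~$G$: diagonal matrices preserve each axis, antidiagonal ones swap them, and no other matrix preserves the pair of axes. Thus the task reduces to the purely group-theoretic condition ${\bar\gamma\in\beta_c^{-1}\bar H}$ for some admissible~$c$, where ${\bar H=\bar G\cap\SL_2(\F_p)}$ is the reduction of~$\Gamma$.

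Finally, the main step is to meet this condition by exploiting the remaining freedom. The lift~$\tau_0$ may be replaced by any $\Gamma$-translate and the target point by any integer translate (this is exactly why $D+\Z$, and not just~$D$, appears), so~$\gamma$ may be modified on the left by $\langle T\rangle$, with ${T=\left(\begin{smallmatrix}1&1\\0&1\end{smallmatrix}\right)}$, and on the right by~$\Gamma$; modulo~$p$ this lets the left coset $\bar\gamma\bar H$ range over $\{\bar T^n\bar\delta_*\bar H:n\in\Z\}$ for a fixed $\bar\delta_*\in\SL_2(\F_p)$ determined by~$P$. Identifying ${\SL_2(\F_p)/\bar H}$ with the set of unordered pairs of distinct points of $\PPP^1(\F_p)$ via ${g\bar H\mapsto g\{0,\infty\}}$, the coset $\beta_c^{-1}\bar H$ corresponds to the pair ${\{0,-1/c\}}$ (to $\{0,\infty\}$ when ${c=0}$), while left multiplication by $\bar T$ acts as the translation ${z\mapsto z+1}$. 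It then remains to check the elementary fact that every unordered pair $\{x,y\}$ of distinct points of $\PPP^1(\F_p)$ is a $\bar T$-translate of some ${\{0,-1/c\}}$ with ${c\in\{0,\ldots,(p-1)/2\}}$: translating so that one member becomes~$0$ yields a pair ${\{0,d\}}$, and the two choices of which member to send to~$0$ produce the candidate values ${c=\pm d^{-1}}$, exactly one of which has representative in $\{1,\ldots,(p-1)/2\}$; the case ${y=\infty}$ is covered by ${c=0}$. This combinatorial covering is the heart of the argument, and the halving of the range of~$c$ reflects precisely the symmetry ${u_\bfb=u_{-\bfb}}$. Points~$\tau$ with extra automorphisms only enlarge the available freedom and hence cause no difficulty. \qed
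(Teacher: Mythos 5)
Your proof is correct and takes essentially the same route as the paper: after moving a lift of $P$ into the fundamental domain, both arguments reduce to the fact that $\beta_0,\ldots,\beta_{(p-1)/2}$ represent all double cosets in $\Gamma\backslash\Gamma(1)/\Gamma_\infty$, which the paper merely asserts (``Now observe that\ldots'') and which your reduction mod~$p$, together with the identification of $\SL_2(\F_p)/\bar H$ with unordered pairs of distinct points of $\PPP^1(\F_p)$ and the translation action of $\bar T$, actually proves. One cosmetic remark: since ${A=-A}$, the symmetry ${u_\bfb=u_{-\bfb}}$ is never really needed, and the halving of the range of~$c$ comes, as your own computation shows, from the two choices of which member of the pair $\{x,y\}$ to translate to~$0$ (giving ${c=\pm d^{-1}}$), not from that symmetry.
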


\begin{proof}
Let ${\tau'\in \HH}$ be such that ${j(\tau')=j(P)}$ and ${U(\tau')=U(P)}$. There exists ${\beta\in \Gamma(1)}$ such that ${\beta^{-1}(\tau') \in D}$. Now observe that the set 
${\left\{\beta_0, \ldots, \beta_{(p-1)/2}\right\}}$
is a full system of representatives of the double classes ${\Gamma\backslash\Gamma(1)/\Gamma_\infty}$, where~$\Gamma_\infty$ is the subgroup of $\Gamma(1)$ stabilizing~$\infty$.  Hence we may write ${\beta= \gamma\beta_c\kappa}$ with ${\gamma \in \Gamma}$, ${c\in\{0, \ldots, \lfloor p/2\rfloor\}}$ and ${\kappa\in \Gamma_\infty}$. Then ${\tau=\kappa\beta^{-1}(\tau')}$ is as desired. \qed
\end{proof}

\begin{proposition}
\label{pu}
For 
${\tau\in \HH}$ such that ${|q_\tau|\le 1/p}$  we have 
\begin{equation}
\label{euc1}
\bigl|\log \left|U_c(\tau)\right| - (p-1)^2\log |q_\tau|\bigr|\le 4\pi^2 \frac{p^2}{\log |q_\tau^{-1}|} + O(p\log p)
\end{equation}
if ${p\mid c}$, and 
\begin{equation}
\label{euc2}
\bigl|\log \left|U_c(\tau)\right| +2(p-1)\log |q_\tau|\bigr|\le 8\pi^2 \frac{p^2}{\log |q_\tau^{-1}|} +O(p)
\end{equation}
if ${p\nmid c}$.
\end{proposition}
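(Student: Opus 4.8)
The plan is to expand $\log|U_c(\tau)|=12p\sum_{\bfa\in A\beta_c}\log|g_\bfa(\tau)|$ by applying Proposition~\ref{pga} to each factor, and to collect the resulting terms into three groups: the \emph{leading} parts $\tfrac12 B_2(a_1)\log|q_\tau|$, the \emph{correction} parts $\log|1-q_\tau^{a_1}e^{2\pi ia_2}|+\log|1-q_\tau^{1-a_1}e^{-2\pi ia_2}|$, and the $O(|q_\tau|)$ tails. First I would make $A\beta_c$ explicit. From $(a_1,a_2)\beta_c=(a_1+ca_2,a_2)$ one gets $(k/p,0)\beta_c=(k/p,0)$ and $(0,k/p)\beta_c=(ck/p,k/p)$; reducing the first coordinate modulo~$1$ and writing $\alpha_k\in[0,1)$ for the class of $ck/p$, the set $A\beta_c$ is the union of the family $\{(k/p,0)\}_{k=1}^{p-1}$ and the family $\{(\alpha_k,k/p)\}_{k=1}^{p-1}$. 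When $p\mid c$ the latter is $\{(0,k/p)\}_{k=1}^{p-1}$; when $p\nmid c$ multiplication by $c$ permutes $\F_p^\times$, so the $\alpha_k$ run through $\{1/p,\dots,(p-1)/p\}$. Each $a_1$ lies in $[0,1)$, so Proposition~\ref{pga} applies (for $p\ge 10$ the hypothesis $|q_\tau|\le0.1$ is implied by $|q_\tau|\le1/p$, the finitely many smaller primes being absorbed by enlarging the implied constants), and the $2(p-1)$ tails contribute $12p\cdot2(p-1)\cdot O(|q_\tau|)=O(p^2|q_\tau|)=O(p)$.

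The leading terms reduce to a Bernoulli computation. Since $\sum_{k=1}^{p-1}B_2(k/p)=-\tfrac{p-1}{6p}$ and $B_2(0)=\tfrac16$, the total coefficient of $\log|q_\tau|$ is $6p\sum_{\bfa\in A\beta_c}B_2(a_1)$, which equals $6p\bigl(-\tfrac{p-1}{6p}+\tfrac{p-1}{6}\bigr)=(p-1)^2$ when $p\mid c$, and $6p\bigl(-\tfrac{p-1}{6p}-\tfrac{p-1}{6p}\bigr)=-2(p-1)$ when $p\nmid c$. This produces the main terms $(p-1)^2\log|q_\tau|$ and $-2(p-1)\log|q_\tau|$ respectively.

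The heart of the matter, and the step I expect to be the main obstacle, is bounding the correction terms. For a family of complex numbers $z_j$ with $|z_j|=|q_\tau|^{j/p}$ and $j$ running through $\{1,\dots,p-1\}$, expanding $\log|1-z_j|=-\mathrm{Re}\sum_{m\ge1}z_j^m/m$ and applying the triangle inequality gives, with $\rho=|q_\tau|^{1/p}=e^{-2\pi\im\tau/p}$,
\[
\Bigl|\sum_{j=1}^{p-1}\log\bigl|1-z_j\bigr|\Bigr|
\le\sum_{m\ge1}\frac1m\sum_{j=1}^{p-1}\rho^{jm}
\le\sum_{m\ge1}\frac{1}{m\bigl(e^{2\pi m\im\tau/p}-1\bigr)}.
\]
Using $e^{\lambda m}-1\ge\lambda m$ with $\lambda=2\pi\im\tau/p$ together with $\sum_m m^{-2}=\pi^2/6$ bounds the right-hand side by $\pi^2/(6\lambda)=\pi^2 p/(6\log|q_\tau^{-1}|)$, since $\log|q_\tau^{-1}|=2\pi\im\tau$. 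Both correction families feeding into $U_c$ (namely $\{(k/p,0)\}$, always, and $\{(\alpha_k,k/p)\}$ when $p\nmid c$) split into two elementary sums of exactly this shape, so after multiplying by $12p$ each family contributes $4\pi^2 p^2/\log|q_\tau^{-1}|$. This accounts for $4\pi^2 p^2/\log|q_\tau^{-1}|$ in the case $p\mid c$ and $8\pi^2 p^2/\log|q_\tau^{-1}|$ in the case $p\nmid c$. Note that only the magnitudes enter: no cancellation among the phases $e^{2\pi imk/p}$ is required.

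It remains to treat the family $\{(0,k/p)\}$ occurring when $p\mid c$, whose correction terms $\log|1-e^{2\pi ik/p}|$ lie on the unit circle, where the series above diverges. Here I would instead invoke the exact factorizations $\prod_{k=1}^{p-1}(1-\zeta_p^k)=p$ and $\prod_{k=1}^{p-1}(1-q_\tau\zeta_p^{-k})=(1-q_\tau^p)/(1-q_\tau)$, giving $\sum_k\log|1-\zeta_p^k|=\log p$ and $\sum_k\log|1-q_\tau\zeta_p^{-k}|=O(|q_\tau|)$; multiplied by $12p$ these yield the extra $O(p\log p)$ in~(\ref{euc1}). Assembling the exact leading term, the correction bounds and the $O(p)$ tail then gives~(\ref{euc1}) and~(\ref{euc2}); the only genuinely delicate point is the correction estimate, everything else being bookkeeping.
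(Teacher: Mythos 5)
Your proof is correct, and its skeleton --- the expansion of $\log|U_c|$ via Proposition~\ref{pga}, the explicit description of $A\beta_c$ in the two cases, the Bernoulli identity $\sum_{k=1}^{p-1}B_2(k/p)=-(p-1)/(6p)$ for the leading terms, and the cyclotomic identities $\prod_{k=1}^{p-1}(1-\zeta_p^k)=p$ and $\prod_{k=1}^{p-1}\left(1-q\zeta_p^{-k}\right)=(1-q^p)/(1-q)$ for the unit-circle terms when $p\mid c$ --- coincides with the paper's; your two families reproduce exactly the paper's identity~(\ref{esumlo}). Where you genuinely diverge is at the step you yourself flag as the heart of the matter. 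The paper isolates this estimate as Lemma~\ref{llogz}, $\bigl|\sum_{k=1}^N\log|1-z^k|\bigr|\le \frac{\pi^2}{6}\frac{1}{\log |z^{-1}|}+O(1)$, and proves it by reducing to $q=|z|$ and invoking the modular transformation ${|\eta(\tau)|=|\tau|^{-1/2}|\eta(-\tau^{-1})|}$ of the Dedekind eta function, the main term arising from the $q^{1/24}$ factor at the transformed variable $Q=e^{4\pi^2/\log q}$. You instead expand $\log|1-z_j|$ into a power series, exchange the two summations, and bound $\sum_{m\ge1}\frac{1}{m(e^{\lambda m}-1)}\le \frac1\lambda\sum_{m\ge1}m^{-2}=\frac{\pi^2}{6\lambda}$ using $e^x-1\ge x$. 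This elementary route buys three things: it is self-contained (no eta function), it carries no $O(1)$ error term at all, and it visibly depends only on the moduli $|z_j|=\rho^j$, so the family $\{(\alpha_k,k/p)\}$ with permuted exponents and arbitrary phases needs no special normalization --- the paper handles that family by observing its terms are powers of the single number $q^{1/p}e^{2\pi i b/p}$ with $bc\equiv1\bmod p$, which works but is less robust. What the eta route buys in exchange is the knowledge that $\pi^2/6$ is the true asymptotic constant as $|z|\to1$, not merely an upper bound, which is relevant to the paper's remark that the term $2\pi p^{1/2}$ in Theorem~\ref{th1} is optimal for the method. The final constants agree: $12p\cdot2\cdot\frac{\pi^2}{6}\frac{p}{\log|q^{-1}|}=4\pi^2p^2/\log|q^{-1}|$ per family, hence $4\pi^2$ and $8\pi^2$ in the two cases; and your treatment of the small primes $p\le7$ (where $1/p>0.1$), by compactness of the relevant annulus modulo the periodicity of $U_c$, is a legitimate patch of a point the paper passes over silently.
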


For the proof of Proposition~\ref{pu}  we need an elementary, but crucial  lemma.

\begin{lemma}
\label{llogz}
Let~$z$ be a complex number, ${|z|<1}$, and~$N$ a positive integer. Then  
\begin{equation}
\label{eeta}
\left|\sum_{k=1}^N\log\bigl|1-z^k\bigr| \right|\le \frac{\pi^2}6\frac1{\log |z^{-1}|}+  O\left(1\right).
\end{equation}
\end{lemma}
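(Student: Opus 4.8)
The plan is to reduce the estimate to the case where $z$ is real and positive, which is the worst case, and then to recognize a double series whose leading behaviour produces the constant $\pi^2/6$. Write $r=|z|<1$ and set $t=\log|z^{-1}|=-\log r>0$; the target is to bound the sum by $\frac{\pi^2}{6t}+O(1)$.

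First I would control each term individually. By the reverse triangle inequality, ${1-r^k\le|1-z^k|\le 1+r^k}$, and since $\log$ is increasing this gives ${\log(1-r^k)\le\log|1-z^k|\le\log(1+r^k)}$. As ${\log(1+x)\le -\log(1-x)}$ for ${0\le x<1}$, the absolute value of each term is dominated by the real-positive case:
$$
\bigl|\log|1-z^k|\bigr|\le -\log(1-r^k).
$$
Summing over $k$ and extending the (nonnegative) sum to infinity,
$$
\left|\sum_{k=1}^N\log|1-z^k|\right|\le -\sum_{k=1}^\infty\log(1-r^k).
$$

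Next I would evaluate the right-hand side by expanding the logarithm. Using ${-\log(1-r^k)=\sum_{m\ge1}r^{km}/m}$ and swapping the (absolutely convergent) order of summation,
$$
-\sum_{k=1}^\infty\log(1-r^k)=\sum_{m=1}^\infty\frac1m\sum_{k=1}^\infty r^{km}=\sum_{m=1}^\infty\frac1m\,\frac{1}{e^{mt}-1},
$$
since ${r^m=e^{-mt}}$ gives ${\sum_{k\ge1}r^{km}=(e^{mt}-1)^{-1}}$. The elementary inequality ${e^{x}-1\ge x}$ now yields ${\tfrac1{e^{mt}-1}\le\tfrac1{mt}}$, whence the whole expression is at most ${\tfrac1t\sum_{m\ge1}m^{-2}=\tfrac{\pi^2}{6t}}$. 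This is exactly the claimed bound, with the $O(1)$ term not even needed.

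The computation is short, so the only real point to watch is that the termwise reduction to the real case in the first step is lossless at the level of the leading constant. The choice ${z=r}$ real and positive turns the lower bound $\log(1-r^k)$ into an equality and, as ${t\to0}$, the sum ${-\sum_k\log(1-e^{-kt})}$ is a Riemann sum for ${\tfrac1t\int_0^\infty\log(1-e^{-x})\,dx=-\tfrac{\pi^2}{6t}}$. Thus the constant $\pi^2/6$ is sharp and no slack is lost by passing through the reverse triangle inequality; this is what makes the clean bound (with the $O(1)$ actually equal to $0$) possible.
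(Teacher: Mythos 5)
Your proof is correct, and it takes a genuinely different and more elementary route than the paper's. Both arguments share the same opening reduction: from ${1-|z|^k\le|1-z^k|\le 1+|z|^k}$ and ${\log(1+x)\le-\log(1-x)}$ one gets ${\bigl|\log|1-z^k|\bigr|\le-\log\bigl(1-|z|^k\bigr)}$, so that only the real positive case ${q=|z|}$ needs to be bounded, and the sum can be extended to infinity by positivity. From there the paper splits into cases: for ${q\le1/2}$ the bound is immediate, while for ${1/2\le q<1}$ it sets ${\tau=\log q/(2\pi i)}$ and invokes the modular transformation ${|\eta(\tau)|=|\tau|^{-1/2}\,|\eta(-\tau^{-1})|}$ of the Dedekind $\eta$-function, which converts ${-\sum_{k\ge1}\log(1-q^k)}$ into an expression whose leading term ${-\frac1{24}\log|Q|}$, with ${Q=e^{4\pi^2/\log q}}$, is exactly ${(\pi^2/6)/\log|z^{-1}|}$, the remaining terms being negative or $O(1)$. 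You instead expand each logarithm into its power series, swap the two summations (legitimate by Tonelli, since all terms are nonnegative), arrive at the Lambert-type series ${\sum_{m\ge1}\frac1m\,\frac1{e^{mt}-1}}$ with ${t=\log|z^{-1}|}$, and finish with the termwise inequality ${e^{mt}-1\ge mt}$, giving the bound ${\frac1t\sum_{m\ge1}m^{-2}=\frac{\pi^2}{6t}}$. This avoids both the case distinction and any modularity input, and it in fact proves a slightly stronger statement: the inequality holds with the $O(1)$ term equal to zero, uniformly in ${|z|<1}$. What the $\eta$-function identity buys in exchange is the exact shape of the secondary terms (the corrections ${\frac1{24}\log q}$ and ${\frac12\log|\tau|}$ are negative, so the true sum undercuts ${\pi^2/(6t)}$ by a quantifiable amount), information your one-sided termwise bound cannot see but which the lemma does not require. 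Your closing remark on sharpness is also correct: for $z$ real positive the first step is lossless, and the Riemann-sum asymptotics ${-\sum_{k\ge1}\log(1-e^{-kt})\sim\frac1t\int_0^\infty-\log(1-e^{-x})\,dx=\frac{\pi^2}{6t}}$ as ${t\to0}$ show the constant $\pi^2/6$ is optimal.
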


\begin{proof}
We have 
${\bigl|\log|1+z|\bigr|\le -\log(1-|z|)}$
for ${|z|<1}$. 
Applying this with ${-z^k}$ instead of~$z$, we conclude that it
suffices to bound 
${-\sum_{k=1}^\infty\log(1-q^k)}$ with ${q=|z|}$. Since~(\ref{eeta}) is obvious for ${|z|\le 1/2}$, we may assume that \begin{equation}
\label{eqhalf}
1/2\le q<1.
\end{equation}
Put ${\tau = \log q/(2\pi i)}$. Then 
$$
-\sum_{k=1}^\infty\log|1-q^k| = \frac1{24}\log q -\log|\eta(\tau)|,
$$
where $\eta(\tau)$ is the Dedekind $\eta$-function. Since ${|\eta(\tau)|=|\tau|^{-1/2}|\eta(-\tau^{-1})|}$, we have 
\begin{equation}
\label{eqeta}
-\sum_{k=1}^\infty\log|1-q^k| = -\frac1{24}\log |Q|+\frac1{24}\log q+\frac12\log|\tau| -\sum_{k=1}^\infty\log|1-Q^k|
\end{equation}
with ${Q=e^{-2\pi i\tau^{-1}}= e^{4\pi^2/\log q}}$. The first term on the right of~(\ref{eqeta}) is exactly ${(\pi^2/6)/\log|z^{-1}|}$, the second term is negative, the third term is again negative (here we use~(\ref{eqhalf})), and the infinite sum is $O(1)$, again by~(\ref{eqhalf}). The lemma is proved.\qed
\end{proof}

\paragraph{Proof of Proposition~\ref{pu}}
Write ${q=q_\tau}$. Recall that for a rational number~$\alpha$ we define ${q^\alpha=e^{2\pi i\alpha\tau}}$. For ${a\in \Q/\Z}$ we denote by~$\tilde a$ the lifting of~$a$ to the interval $[0,1)$. 
%We also put ${\ell_\bfa=B_2(\tilde a_1)/2}$ (recall that $B_2(T)$ is the second Bernoulli polynomial).  
Then 
for ${\tau\in \HH}$ satisfying  ${|q|\le 0.1}$ we deduce from Proposition~\ref{pga} that 
\begin{equation}
\label{ewa}
\begin{aligned}
\log \left|U_c(\tau)\right|=&&&\hskip-6pt6p\sum_{\bfa\in A\beta_c}B_2(\tilde a_1)\log|q|\\
&+&&\hskip-6pt 12p\sum_{\bfa\in A\beta_c}\Bigl( \log\bigl|1-q^{\tilde a_1}e^{2\pi ia_2}\bigr|+ \log\bigl|1-q^{1-\tilde a_1}e^{-2\pi ia_2}\bigr|\Bigr)+ O(p^2|q|).
\end{aligned}
\end{equation}
The rest of the proof splits into two cases and relies on the identity
$$
\sum_{k=1}^{N-1}B_2\left(\frac kN\right) =-\frac{(N-1)}{6N}.
$$

\paragraph{The first case: ${p\mid c}$}
In this case ${A\beta_c=A}$. Hence 
\begin{equation}
\label{ebern}
\sum_{\bfa\in A\beta_c}B_2(\tilde a_1)= \sum_{k=1}^{p-1}B_1\left(\frac kp\right)+ (p-1)B_2(0)= \frac{(p-1)^2}{6p}.
\end{equation}
Further,
\begin{equation}
\label{esumlo}
\sum_{\bfa\in A\beta_c}\Bigl( \log\bigl|1-q^{\tilde a_1}e^{2\pi ia_2}\bigr|+ \log\bigl|1-q^{1-\tilde a_1}e^{-2\pi ia_2}\bigr|\Bigr)= 2\sum_{k=1}^{p-1}\log\bigl|1-q^{k/p}\bigr| +\log\left|\frac{1-q^p}{1-q}\right|+\log p.
%\label{esumlo} \ge& \sum_{k=1}^{p-1}\Bigl|\log\bigl|1-|q|^{k/p}\bigr|\Bigr| +\log p .
\end{equation}
Lemma~\ref{llogz} with ${z=q^{1/p}}$ implies that 
$$
\sum_{k=1}^{p-1}\log\bigl|1-q^{k/p}\bigr| \le \frac{\pi^2}6\frac p{\log |q^{-1}|}+  O\left(1\right).
$$
 Also, 
${\log\left|1-q^p\right|\ll |q|^p}$ and  
${\log\left|1-q\right|\ll |q|}$. 
Combining all this with~(\ref{ewa}),~(\ref{ebern}) and~(\ref{esumlo}), we obtain~(\ref{euc1}).

\paragraph{The second case: ${p\nmid c}$} 
In this case 
$$
A\beta_c=\{(a,0) : a\in p^{-1}\F_p^\times\}\cup\{(a,ab) : a\in p^{-1}\F_p^\times\},
$$ 
where ${b\in \Z}$ satisfies ${bc\equiv 1\bmod p}$. Hence 
$$
\sum_{\bfa\in A\beta_c}B_2(\tilde a_1)= 2\sum_{k=1}^{p-1}B_1\left(\frac kp\right)=-\frac{p-1}{3p}. 
$$
Further,
$$
\sum_{\bfa\in A\beta_c}\Bigl( \log\bigl|1-q^{\tilde a_1}e^{2\pi ia_2}\bigr|+ \log\bigl|1-q^{1-\tilde a_1}e^{-2\pi ia_2}\bigr|\Bigr)= 2\sum_{k=1}^{p-1}\log\bigl|1-q^{k/p}\bigr|+ 2\sum_{k=1}^{p-1}\log\bigl|1-(q^{1/p}e^{2\pi ib/p})^k\bigr|.
$$
Again using Lemma~\ref{llogz}, we  complete the proof.   \qed

\section{Proof of Theorem~\ref{th1}}
\label{sprel}

In this section~$p$ is a prime number and~$G$ is the normalizer of the diagonal subgroup 
of $\GL_2(\Z/p\Z)$. Define the ``modular units''~$U_c$ as in Section~\ref{smod}. Recall 
that ${U=U_0}$ belongs to the field $\Q(X_G)$. 
Theorem~\ref{th1} is a consequence of the following two statements. 

\begin{proposition}
\label{pana}
Assume that ${p\ge 3}$. For any ${P\in Y_G(\C)}$  we have either 
$$
\log|j(P)|\le 2\pi p^{1/2}+6\log p+O(1)
$$
or 
\begin{equation}
\label{ean}
\log |j(P)| \le \frac1{2(p-1)}\bigl|\log |U(P)|\bigr| +2\pi p^{1/2}-6\log p+O\bigl(1\bigr). 
\end{equation}
\end{proposition}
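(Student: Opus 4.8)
The plan is to push everything through the single real parameter ${t = \log|q_\tau^{-1}| = 2\pi\,\im\tau}$, exploiting the fact that on the fundamental domain both the modular invariant and the modular unit~$U$ have completely explicit behaviour in~$q_\tau$. First I would apply Lemma~\ref{lptau} to replace the point~$P$ by a concrete ${\tau \in D+\Z}$ together with an index ${c \in \{0,\ldots,(p-1)/2\}}$ satisfying ${j(\tau)=j(P)}$ and ${U_c(\tau)=U(P)}$. The confinement ${\tau \in D}$ forces ${|q_\tau| \le e^{-\pi\sqrt 3}}$, hence ${t \ge \pi\sqrt 3}$; and since the expansion ${j = q_\tau^{-1}+744+O(|q_\tau|)}$ gives ${|j(\tau)q_\tau| \ll 1}$ for such bounded~$|q_\tau|$, I obtain the one-sided estimate ${\log|j(P)| = \log|j(\tau)| \le t + O(1)}$. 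This is all I ever need: the reverse inequality can fail only where~$j$ is small (near the elliptic point~$\rho$), whereas the proposition bounds~$\log|j|$ only from above.

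The argument then splits according to the size of~$t$. If ${t \le 2\pi p^{1/2}+6\log p}$, the first alternative is immediate from ${\log|j(P)| \le t+O(1)}$. Otherwise ${t > 2\pi p^{1/2}+6\log p > \log p}$, so ${|q_\tau| < 1/p}$ and Proposition~\ref{pu} applies; I then distinguish its two cases. When ${p \mid c}$ one has ${\log|U(P)| = -(p-1)^2 t + E_1}$ with ${|E_1| \le 4\pi^2 p^2/t + O(p\log p)}$; the leading term dominates the error, so ${\bigl|\log|U(P)|\bigr|}$ has size ${(p-1)^2 t}$ and
$$
\frac{1}{2(p-1)}\bigl|\log|U(P)|\bigr| \ge \frac{(p-1)t}{2} - O(p^{1/2}) \gg t \ge \log|j(P)|,
$$
so the second alternative holds with enormous slack, its extra term ${2\pi p^{1/2}-6\log p}$ being irrelevant.

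The genuinely delicate case is ${p \nmid c}$, where ${\log|U(P)| = 2(p-1)t + E_2}$ with ${|E_2| \le 8\pi^2 p^2/t + O(p)}$. Solving for~$t$ and using ${\log|U(P)| \le \bigl|\log|U(P)|\bigr|}$ gives
$$
\log|j(P)| \le t + O(1) \le \frac{1}{2(p-1)}\bigl|\log|U(P)|\bigr| + \frac{4\pi^2 p^2}{(p-1)t} + O(1).
$$
The step I expect to be the main obstacle is the bookkeeping that converts the tail ${4\pi^2 p^2/\bigl((p-1)t\bigr)}$ into exactly ${2\pi p^{1/2} - 6\log p + O(1)}$. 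The threshold ${2\pi p^{1/2}+6\log p}$ is precisely calibrated for this: for~$t$ above it,
$$
\frac{4\pi^2 p^2}{(p-1)t} \le \frac{4\pi^2 p^2}{(p-1)\bigl(2\pi p^{1/2}+6\log p\bigr)} = 2\pi p^{1/2}\Bigl(1 - \frac{3\log p}{\pi p^{1/2}} + O\bigl(\tfrac{\log^2 p}{p}\bigr)\Bigr) = 2\pi p^{1/2} - 6\log p + O(1),
$$
the coefficient~$6$ of~$\log p$ emerging as ${2\pi\cdot(3/\pi)}$ and the correction ${p^2/(p-1)=p+O(1)}$ being swallowed by the error term. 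Substituting this into the previous display yields the second alternative. The only further points needing care are that all the ${O(\cdot)}$ constants are absolute and uniform in~$p$ (finitely many small primes being absorbed), and the systematic use of ${\log|U(P)| \le \bigl|\log|U(P)|\bigr|}$, so that the sign of ${\log|U(P)|}$—which is genuinely ambiguous near the threshold—never enters.
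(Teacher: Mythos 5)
Your proof is correct and follows essentially the same route as the paper's: the reduction via Lemma~\ref{lptau}, the dichotomy from ${j(\tau)=q^{-1}+O(1)}$ at the threshold ${2\pi p^{1/2}+6\log p}$, and then Proposition~\ref{pu} in the two cases, with the identical calibration turning ${4\pi^2p^2/\bigl((p-1)t\bigr)}$ into ${2\pi p^{1/2}-6\log p+O(1)}$. The only (immaterial) difference is in the case ${p\mid c}$, where the paper observes that ${\log|q|}$ is within $O(1)$ of ${(p-1)^{-2}\log|U_c(\tau)|}$ while you argue that ${\tfrac{1}{2(p-1)}\bigl|\log|U(P)|\bigr|}$ dominates~$t$ outright; the two observations are equivalent here.
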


\begin{proposition} 
\label{pari}
For  ${P\in Y_G(\Z)}$ we have ${0\le \log|U(P)| \le 24 p\log p}$.
\end{proposition}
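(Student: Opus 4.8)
The plan is to show that $U(P)$ is a nonzero rational integer (which gives the lower bound) and that $U(P)^{p-1}$ divides a fixed power of~$p$ (which gives the upper bound), both by exploiting the integrality statements of Proposition~\ref{psiu}.

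First I would establish the lower bound. Since ${U=\prod_{\bfa\in A}u_\bfa}$ with ${u_\bfa=g_\bfa^{12p}}$, and each~$g_\bfa$ is integral over $\Z[j]$ by Proposition~\ref{psiu}, the product~$U$ is integral over $\Z[j]$ as well. For ${P\in Y_G(\Z)}$ we have ${j(P)\in\Z}$, so specializing a monic integrality relation for~$U$ over $\Z[j]$ shows that $U(P)$ is an algebraic integer; as ${U\in\Q(X_G)}$ and~$P$ is $\Q$-rational, we get ${U(P)\in\Q}$, hence ${U(P)\in\Z}$. Because~$P$ lies in~$Y_G$ (away from the cusps) and $g_\bfa$ does not vanish on~$\HH$, we have ${U(P)\ne 0}$, so ${|U(P)|\ge 1}$ and ${\log|U(P)|\ge 0}$.

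Next, the upper bound. From Proposition~\ref{psiu} the element ${(1-\zeta_p)g_\bfa^{-1}}$ is integral over $\Z[j]$, hence so is its $12p$-th power ${(1-\zeta_p)^{12p}u_\bfa^{-1}}$. Taking the product over the ${|A|=2(p-1)}$ elements of~$A$ shows that
$$
(1-\zeta_p)^{24p(p-1)}U^{-1}
$$
is integral over $\Z[j]$. Specializing at~$P$ (again using ${j(P)\in\Z}$), we find that ${(1-\zeta_p)^{24p(p-1)}U(P)^{-1}}$ is an algebraic integer of $\Q(\zeta_p)$.

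Finally I would take the norm ${N=N_{\Q(\zeta_p)/\Q}}$ of this algebraic integer, which must be a rational integer. Since ${U(P)\in\Q}$ and ${[\Q(\zeta_p):\Q]=p-1}$, and since ${N(1-\zeta_p)=\Phi_p(1)=p}$, this norm equals ${p^{24p(p-1)}U(P)^{-(p-1)}}$. Thus $U(P)^{p-1}$ divides $p^{24p(p-1)}$ in~$\Z$, whence ${|U(P)|^{p-1}\le p^{24p(p-1)}}$ and ${\log|U(P)|\le 24p\log p}$, as required. The argument is essentially bookkeeping once Proposition~\ref{psiu} is in hand; the only points requiring care are tracking the exact exponent $24p(p-1)$, which is governed jointly by the cardinality of~$A$ and by the exponent $12p$ in ${u_\bfa=g_\bfa^{12p}}$, and confirming that the norm of $1-\zeta_p$ contributes precisely a power of~$p$ rather than some other integer.
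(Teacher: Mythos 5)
Your proposal is correct and follows essentially the same route as the paper: both deduce from Proposition~\ref{psiu} that $U$ and $(1-\zeta_p)^{24p(p-1)}U^{-1}$ are integral over $\Z[j]$, specialize at the integral point to conclude $U(P)$ is a non-zero rational integer dividing $(1-\zeta_p)^{24p(p-1)}$, and take the $\Q(\zeta_p)/\Q$-norm to get $U(P)^{p-1}\mid p^{24p(p-1)}$. Your version merely spells out the bookkeeping (the count $|A|=2(p-1)$ times the exponent $12p$, and $N_{\Q(\zeta_p)/\Q}(1-\zeta_p)=p$) that the paper leaves implicit.
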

Combining the two propositions, we find that for ${P\in Y_{\mathrm{split}}(p)(\Z)}$ we have
$$
\log |j(P)| \le 2\pi p^{1/2} +6\log p +O(1),
$$
which proves Theorem~\ref{th1} for ${p\ge 3}$. 

A similar approach can be used for ${p=2}$ as well, but in this case it is easier to appeal to the general Runge theorem: if an affine curve~$Y$, defined over~$\Q$, has $2$ (or more) rational points at infinity, then integral points on~$Y$ are effectively bounded; see, for instance, \cite{Bo83,Le08}.

\paragraph{Proof of Proposition~\ref{pana}}
According to Lemma~\ref{lptau}, there exists ${\tau\in D+\Z}$ and ${c\in \Z}$ with ${U_c(\tau)=U(P)}$ and ${j(\tau)=j(P)}$. We write ${q=q_\tau}$. Since ${\tau\in D+\Z}$, we have 
\begin{equation}
\label{ejtau}
j(\tau)=q^{-1}+O(1),
\end{equation} 
which implies that either  ${\log|j(P)|\le 2\pi p^{1/2}+6\log p+O(1)}$ or ${\log|q^{-1}| \ge 2\pi p^{1/2}+6\log p}$. In the latter case we apply Proposition~\ref{pu}. When 
 ${p\nmid c}$ it yields
$$
\left|\log|q| + \frac1{2(p-1)} \log |U_c(\tau)|\right|\le  \frac{8\pi^2 p^2}{2(p-1)(2\pi p^{1/2}+6\log p)}+O(1)= 2\pi p^{1/2}-6\log p+O(1),
$$
which, together with~(\ref{ejtau}), implies the result. In the case ${p\mid c}$ Proposition~\ref{pu} gives
$$
\left|\log|q| - \frac1{(p-1)^2} \log |U_c(\tau)|\right|\le  \frac{4\pi^2 p^2}{(p-1)^2(2\pi p^{1/2}+6\log p)}+O(1)= O(1),
$$
which implies even a better bound than needed.
\qed

\paragraph{Proof of Proposition~\ref{pari}}
Since~$U$ belongs to $\Q(X_G)$ and has no pole or zero outside the cusps, $U(P)$ is a non-zero rational number.
Let ${\zeta=\zeta_p}$ be a primitive $p$-th root of unity.  Since~$U$ is a product of ${24p(p-1)}$ Siegel functions, Proposition~\ref{psiu} implies that both~$U$ and ${(1-\zeta)^{24p(p-1)}U^{-1}}$ are integral over $\Z[j]$. Hence, for ${P\in Y_G(\Z)}$ both the numbers ${U(P)}$ and ${(1-\zeta)^{24p(p-1)}U(P)^{-1}}$ are algebraic integers. Since ${U(P)\in \Q^\times}$, it is a non-zero rational integer; in particular, ${\log|U(P)|\ge 0}$. 
Further, $U(P)$ divides ${(1-\zeta)^{24p(p-1)}}$. Taking the $\Q(\zeta)/\Q$-norm, we 
see that ${U(P)^{p-1}}$ divides ${p^{24p(p-1)}}$. This proves the proposition. \qed

\section{Proof of Theorem~\ref{tse}}

First of all, recall the following \textit{integrality property} of the $j$-invariant. 

\begin{theorem} {\bf (Mazur, Momose, Merel).}
\label{tmmm1}
For a prime ${p= 11}$ or ${p\geq 17}$, the $j$-invariant $j(P)$ of any non-cuspidal 
point of $X_\spl(p)(\Q )$ belongs to~$\Z$.
\end{theorem}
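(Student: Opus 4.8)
The plan is to translate the assertion into the language of Galois representations and then combine a local analysis at primes of potentially multiplicative reduction with a global argument of Mazur's type. A non-cuspidal point $P\in X_\spl(p)(\Q)$ corresponds to an elliptic curve $E/\Q$ (well defined up to quadratic twist, so that $j(P)=j(E)$ is unambiguous) whose mod-$p$ representation $\bar\rho_{E,p}:\gal(\bar\Q/\Q)\to\GL_2(\F_p)$ has image contained, after conjugation, in the normalizer $N(C_s)$ of a split Cartan subgroup $C_s$. I would argue by contradiction: suppose $j(P)\notin\Z$, so that $v_\ell(j(P))<0$ for some prime~$\ell$. Then $E$ has potentially multiplicative reduction at~$\ell$ and, over $\Q_\ell$, becomes a quadratic twist of a Tate curve.

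The first step is the local analysis at a prime $\ell\neq p$. Writing the Tate representation in the standard basis $(\zeta_p,q^{1/p})$, the action of the inertia group $I_\ell$ is, up to the quadratic twisting character $\chi$, of the shape $\chi(\sigma)\left(\begin{smallmatrix}1&*\\0&1\end{smallmatrix}\right)$, where the upper-right entry is nontrivial on $I_\ell$ precisely when $p\nmid v_\ell(j(P))$. The elementary but decisive observation is that $N(C_s)$ contains no non-scalar matrix with a repeated eigenvalue: its diagonal (Cartan) elements are semisimple, and its anti-diagonal elements have trace zero, hence are semisimple as well. Since this property is conjugation-invariant, a nontrivial unipotent (or $(-1)$ times a unipotent, when $\chi(\sigma)=-1$) contribution from $I_\ell$ cannot be accommodated in the image. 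This forces $p\mid v_\ell(j(P))$, so that the denominator of $j(P)$ is constrained to be a perfect $p$-th power supported away from~$p$. The prime $\ell=p$ requires separate treatment, since there the cyclotomic character is ramified and one must instead invoke Serre's description of the inertia image at~$p$, again reading off the incompatibility with membership in $N(C_s)$.

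The local step alone does not give integrality — it only excludes denominators that are not $p$-th powers — and the genuinely hard part, which is why the statement is credited to Mazur, Momose and Merel, is the global argument needed to eliminate the surviving cases. Here I would pass through the $\Q$-isomorphism $X_\spl(p)\cong X_0(p^2)/w_p$ alluded to in the remark above, so as to work on the better-understood curve $X_0(p^2)$, and then apply Mazur's method: a point with potentially multiplicative reduction at~$\ell$ specializes modulo~$\ell$ to a cusp, so that the class $[P]-[\infty]$ in the Jacobian reduces to a difference of cusps, which is torsion by Manin–Drinfeld. Playing this against the injectivity of reduction on the prime-to-$\ell$ torsion of $J(\Q)$, together with the Eisenstein-ideal (or winding-quotient) structure of the cuspidal divisor class group, is what yields the contradiction. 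The main obstacle is exactly this global input: identifying the relevant quotient of the Jacobian and verifying that it carries no unexpected rational points. This is also the step where the precise hypothesis $p=11$ or $p\ge 17$ must enter, the small primes (and notably $p=13$) being excluded because there the arithmetic of the Jacobian leaves room for non-integral points and the Eisenstein argument fails.
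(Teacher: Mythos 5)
Your first two paragraphs are correct but essentially idle: it is true that every non-scalar element of the normalizer of a split Cartan is semisimple (the Cartan part is diagonalizable, the anti-diagonal part has trace zero and nonzero determinant, hence distinct eigenvalues ${\pm\lambda}$ for $p$ odd), so the Tate-curve unipotent at a prime~$\ell\ne p$ of potentially multiplicative reduction forces ${p\mid v_\ell(j(P))}$. But, as you concede, this constrains the denominator without removing it, and in fact no step of this kind occurs in the paper's proof. The entire theorem lives in your last paragraph, and there ``apply Mazur's method'' is a plan, not a proof; it conceals exactly the three points where Mazur, Momose and Merel each had to work. First, your sketch never mentions the formal immersion: knowing that the class of ${(P)-(\infty)}$ dies in a quotient with finite Mordell--Weil group yields no contradiction unless the map from the curve into that quotient is a formal immersion at $\infty(\F_\ell)$ (verified in the paper via the $q$-expansion principle, following Merel), so that ${P\equiv\infty\pmod\ell}$ forces ${P=\infty}$. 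Second, $X_\spl(p)$ has \emph{two} Galois orbits of cusps, and $P$ could a priori specialize at~$\ell$ to a non-rational cusp, a case your class ${[P]-[\infty]}$ does not see. Third, at ${\ell=2}$ injectivity of reduction on torsion fails in general; this is precisely why the case ${\ell=2}$ had to be split historically between Momose (${p\equiv1\bmod 8}$) and Merel (${p\not\equiv1\bmod 8}$). Routing through ${X_0(p^2)/w_p}$ compounds the difficulty, since the Eisenstein and winding quotients of $J_0(p^2)$ are far less understood than Mazur's prime-level results; ``identifying the relevant quotient and verifying that it carries no unexpected rational points'' is not a residual verification but the whole problem.

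The paper stays at level~$p$: it maps ${Q=\bigl(E,(A,B)\bigr)\in X_\spcar(p)}$ to ${t\cdot\cl\bigl((E,A)-(E/B,E[p]/B)\bigr)\in J_0(p)}$, and the anti-equivariance ${g_t\circ w=-w_p\circ g_t}$ together with the triviality of ${1+w_p}$ on the Eisenstein quotient makes $\tilg_t$ factor through $X_\spl(p)$. The decisive input, absent from your outline, is at the prime~$p$ itself: $p$ splits in the quadratic field~$K$ cut out by the Cartan normalizer, $E$ is not potentially supersingular at~$p$, and $(E,B)$ coincides with ${w_p(E,A)=(E/A,E[p]/A)}$ in the fibers of characteristic~$p$ (Verschiebung plus Raynaud's theorem on group schemes of type ${(p,\ldots,p)}$). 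Hence $\tilg(P)$ vanishes in the fiber at~$p$, and since $\tilJ(p)(\Q)$ is finite with reduction sufficiently controlled even at~$2$, one gets ${\tilg(P)=0}$ \emph{generically}. This single vanishing simultaneously bypasses the ${\ell=2}$ obstruction and pins down the cusp: the non-rational cusps map to ${\cl(0-\infty)\neq 0}$ whenever the numerator of ${(p-1)/12}$ exceeds~$1$ — which is exactly the condition ${p=11}$ or ${p\ge 17}$, excluding ${p=13}$, so your guess about where the hypothesis enters is only half right — whence $P$ must specialize to the rational cusp, where the winding-quotient map $g_t^+$ is a formal immersion and Mazur's classical argument concludes. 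Without an analogue of this fiber-at-$p$ vanishing, your outline cannot close the loop at ${\ell=2}$ nor exclude specialization to the non-rational cusps, so the gap is genuine.
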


This is a combination of results of Mazur~\cite{Ma78},  Momose \cite{Mo84} and  Merel \cite{Me05}. 
For more details see the Appendix (Section~\ref{sapp}), where we give a short 
unified proof. 

  Denote by $\height (\alpha)$ the absolute logarithmic height of an algebraic number~$\alpha$. 
If~$\alpha$ is a non-zero rational integer, then ${\height(\alpha)=\log|\alpha|}$; it follows in 
particular from the above that, if $E$ is an elliptic curve over $\Q$ endowed with a normalizer of split Cartan 
mod $p$ with $p\ge 17$, then $\height (j_{E})=\log |j_{E}|$.

In view of Theorem~\ref{tmmm1}, Theorem~\ref{tse} is a straightforward consequence of  Theorem~\ref{th1}  and the following proposition, whose proof will be the goal of 
this section.

\begin{proposition}
\label{pubo} 
There exists an absolute effective constant~$\kappa$ such that the following holds. Let~$p$ 
be a prime number, and~$E$ a non-CM elliptic curve over~$\Q$, endowed with a structure 
of normalizer of split Cartan subgroup in level~$p$. Then  
\begin{equation}
\label{enogrh}
\height(j_E) =\log |j_{E} |\ge \kappa p.
\end{equation}
\end{proposition}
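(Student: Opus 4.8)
The plan is to translate the group-theoretic hypothesis into an isogeny configuration and then force $j_E$ to be large by a height (transcendence) estimate. First I would record the geometric meaning of the Cartan structure. Since the image of $\rho_{E,p}$ lies in the normalizer of a split Cartan, the preimage of the Cartan itself is an index-two subgroup cutting out a quadratic field $K$, and over $K$ the representation becomes diagonal. Thus $E[p]$ acquires, over $K$, two Galois-stable lines $L_1,L_2$, giving two cyclic $p$-isogenies $\phi_i\colon E\to E_i:=E/L_i$ defined over $K$, with the nontrivial $\sigma\in\gal(K/\Q)$ interchanging $L_1\leftrightarrow L_2$, so that $E_2=E_1^{\sigma}$. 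I would then note three facts: the $j$-invariants $j_{E_1},j_{E_2}$ are conjugate over~$\Q$; for $E$ non-CM and $p$ large they are distinct (equality would give $j_{E_1}\in\Q$, i.e.\ a non-CM rational point on $X_0(p)$, excluded by Mazur \cite{Ma78} for large~$p$); and $j_E,j_{E_1},j_{E_2}$ are all algebraic integers, since $j_E\in\Z$ by Theorem~\ref{tmmm1} and potential good reduction is isogeny-invariant. In particular the minimal isogeny degree between $E$ and $E_1$ is exactly the prime~$p$.

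Next I would clear away degenerate reduction at~$p$. Multiplicative (or potentially multiplicative) reduction is impossible because $j_E\in\Z$ forces $v_p(j_E)\ge 0$; good supersingular reduction is impossible because Serre's description of inertia then puts the image of the inertia group inside a non-split Cartan (the two level-two fundamental characters), which cannot be accommodated inside the normalizer of a split Cartan once~$p$ is large. Hence $E$ has good ordinary or additive potentially-good reduction at~$p$, which keeps the configuration non-degenerate for the estimates below.

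The heart of the argument is a lower bound for $\height(j_E)$ that grows with~$p$, and this must come from transcendence rather than from any congruence. Here I would invoke effective isogeny estimates in the spirit of Masser--W\"ustholz \cite{MW93} and the refinements for elliptic periods used in effective Serre bounds (e.g.\ \cite{Pe01}): the existence over the fixed degree-two field~$K$ of a cyclic isogeny of prime degree~$p$, minimal for the pair $(E,E_1)$, forces $\height(j_E)$ to exceed an explicit increasing function of~$p$. To reach the clean linear estimate $\height(j_E)\ge\kappa p$ I would push this through in its sharpest period form, controlling the Archimedean contribution of the pair $\{E_1,E_2\}$ against an Arakelov/N\'eron--Tate lower bound and keeping track of the constants.

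The main obstacle is obtaining the \emph{linear} rate in~$p$ with a constant~$\kappa$ that is \emph{absolute}, i.e.\ independent of~$E$ and of the auxiliary field~$K$. Soft inputs are provably insufficient: the Kronecker congruence $j_{E_1}\equiv j_{E_2}\equiv j_E \pmod{p}$, even combined with the product formula applied to the nonzero algebraic integer $j_{E_1}-j_{E_2}$, only yields $\height(j_E)\gg\log p$, and a black-box application of the isogeny theorem gives a sub-linear power $p^{\theta}$. Upgrading to $\kappa p$ requires the optimal form of the period/height inequality together with genuine uniformity in the bounded degree of~$K$ and in the isogeny; securing that uniformity, and verifying that the reduction types excluded above really do not reappear to spoil the estimate, is where the delicate work lies.
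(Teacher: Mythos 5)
There is a genuine gap, and it sits exactly where you flag it. Your configuration is the same as the paper's (two $p$-isogenies $E\to E_i=E/C_i$ over the quadratic field~$K$, swapped by $\gal(K/\Q)$), but you then apply the isogeny bound to the pair $(E,E_1)$, whose minimal isogeny has degree~$p$. Even with Pellarin's optimal exponent~$2$ this gives only ${p\le \kappa(2)\left(1+\height(j_E)\right)^2}$, i.e. ${\height(j_E)\gg p^{1/2}}$ --- which is not the stated linear bound, and is in fact the borderline failure case: it has the same exponent as the Runge upper bound ${\log|j(P)|\le 2\pi p^{1/2}+6\log p+C}$ of Theorem~\ref{th1}, so it cannot be played off against it. Your proposed escape (an ``optimal period form'' of the isogeny estimate with extra uniformity) is not a known statement and is not what is needed. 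The missing idea is elementary: the configuration contains a \emph{bigger} cyclic isogeny. Composing the dual of ${E\to E_1}$ with ${E\to E_2}$ yields a cyclic $p^2$-isogeny ${E_1\to E\to E_2}$ over~$K$ (cyclic because the kernels $C_1\ne C_2$ are independent lines). Since~$E_1$ is non-CM, this isogeny is minimal up to sign (Corollary~\ref{cpel}), so Pellarin's bound applied to the pair $(E_1,E_2)$ gives ${p^2\le \kappa(2)\left(1+\height(j_{E_1})\right)^2}$, hence ${\height(j_{E_1})\ge \kappa_1 p}$ --- the linear rate, with an absolute constant because ${[K:\Q]=2}$ is fixed. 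One then transfers the bound from~$E_1$ back to~$E$: Faltings' lemma says ${\height_\calF(E_1)\le \height_\calF(E)+\frac12\log p}$ under a $p$-isogeny, and Silverman's comparison ${\bigl|\height(j_\calE)-12\height_\calF(\calE)\bigr|\le 6\log\bigl(1+\height(j_\calE)\bigr)+O(1)}$ converts this into ${\height(j_E)\ge \kappa p}$. Note also that strict linearity is not sacred: against Theorem~\ref{th1} any exponent ${\theta>1/2}$ suffices, which is why the paper remarks that any isogeny exponent below~$4$ would do --- but only once applied to the degree-$p^2$ isogeny; your degree-$p$ pair gives exactly ${\theta=1/2}$ even in the optimal case.

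Two secondary remarks. Your paragraph on reduction types at~$p$ (excluding potentially multiplicative and supersingular reduction) plays no role in this proposition; it belongs to the integrality statement proved in the Appendix, and the paper's proof of Proposition~\ref{pubo} uses no reduction analysis at all. And your claim that ${j_{E_1}=j_{E_2}}$ would produce a non-CM rational point on $X_0(p)$ is shaky as stated --- the $p$-isogeny carried by~$E_1$ is a priori defined only over~$K$, so ${j_{E_1}\in\Q}$ does not immediately give a $\Q$-point of $X_0(p)$ --- but this assertion is not load-bearing, since the correct argument never needs ${j_{E_1}\ne j_{E_2}}$.
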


The proof of Proposition~\ref{pubo} relies on Pellarin's refinement~\cite{Pe01} of the   Masser-W\"ustholz famous bound~\cite{MW90} for  the degree of the minimal isogeny between two isogenous elliptic 
curves. 

\begin{theorem}
\label{tpel}
{\bf (Masser-W\"ustholz, Pellarin)} Let~$E$ be an elliptic curve defined over a number 
field~$K$ of degree~$d$. Let~$E'$ be another elliptic curve, defined over~$K$ and isogenous
to~$E$. Then there exists an isogeny  ${\psi:E\to E'}$ of degree at most ${\kappa(d)
\left(1+\height(j_E)\right)^2}$, where the constant $\kappa(d)$ depends only on~$d$ 
and is effective.
\end{theorem}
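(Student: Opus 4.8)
The plan is to prove the isogeny estimate by the transcendence method of Masser--W\"ustholz, organised around their \emph{Period Theorem}, and then to insert Pellarin's sharpenings of the numerical parameters. First I would reduce the statement about isogenies to a statement about abelian subvarieties of a product. Set $A=E\times E'$, an abelian variety of dimension~$2$ over~$K$, and fix the product polarization coming from the standard ones on the two factors; this equips~$A$ with a projective embedding, hence with well-defined notions of degree and of (theta/Faltings) height for its abelian subvarieties. A nonzero isogeny $\psi\colon E\to E'$ is the same datum as its graph $\Gamma_\psi\subset A$, a one-dimensional abelian subvariety, and $\deg\psi$ is comparable, up to factors depending only on the polarization, to the degree of $\Gamma_\psi$ in the fixed embedding. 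Thus it suffices to produce \emph{some} one-dimensional abelian subvariety $B\subset A$ distinct from the two coordinate factors whose degree is bounded by $\kappa(d)(1+\height(j_E))^2$: any such~$B$ is automatically a graph $\Gamma_\psi$, and controlling $\deg B$ controls $\deg\psi$.

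The existence of such a $B$ of controlled degree is exactly the content of the Period Theorem applied to~$A$. Since $E$ and $E'$ are isogenous, their period lattices are commensurable, so $A$ carries a nonzero period $u\in\mathrm{Lie}(A)(\C)=\C^2$ (a lattice vector, $\exp_A(u)=0$) not proportional to either coordinate direction; the smallest abelian subvariety $B\subseteq A$ with $u\in\mathrm{Lie}(B)(\C)$ is then forced to be one-dimensional, and the Period Theorem bounds $\deg B$ polynomially in $\dim A=2$, in $d=[K:\Q]$, in the height of~$A$, and in the archimedean size of a smallest such period~$u$. I would quote this with explicit exponents and then translate the right-hand side into the required shape. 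The height of $A$ is $\height(j_E)+\height(j_{E'})+O(1)$, and a short period~$u$ can be chosen of size controlled by the same data; gathering these yields $\deg B\le\kappa(d)(1+\height(j_E))^2$.

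The Period Theorem itself is the analytic heart, and proving it is where the real work lies. I would run the three classical moves of transcendence on group varieties. First, an \emph{auxiliary construction}: by Siegel's lemma, build a nonzero polynomial in the theta coordinates of~$A$, with algebraic-integer coefficients of controlled height, vanishing to high order, together with its derivatives along the line $\C u$, at the $\exp_A$-images of many integer multiples of a small submultiple of~$u$. Second, an \emph{analytic extrapolation}: via the Schwarz lemma and maximum-modulus estimates, exploiting that all these points lie on the one-parameter analytic subgroup through~$u$, propagate the vanishing to further points, producing more vanishing than the dimensions permit unless the subgroup degenerates. Third, a \emph{zero (multiplicity) estimate} on~$A$: the Masser--W\"ustholz multiplicity estimate then forces the analytic subgroup through~$u$ into a proper algebraic subgroup, namely~$B$, and tracking the parameters through the estimate yields the explicit bound on $\deg B$.

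Passing to Pellarin's sharp form is the last ingredient: I would optimise the auxiliary parameters (degree of the polynomial, number and multiplicity of interpolation points, Schwarz radius) and use the explicit multiplicity estimate so that the height dependence collapses to the square $(1+\height(j_E))^2$ and the dependence of the constant on~$d$ becomes fully explicit and effective. The main obstacle, by a wide margin, is this transcendence core: balancing the auxiliary construction against the zero estimate to obtain \emph{simultaneously} the correct exponent~$2$ and a clean effective dependence on~$d$. A subsidiary but genuine difficulty is the height bookkeeping on~$A$: reducing from $\height(j_E)+\height(j_{E'})$ to $\height(j_E)$ alone requires comparing $\height(j_{E'})$ with $\height(j_E)$, which is delicate because an isogeny moves the semistable Faltings height by $\tfrac12\log(\deg)$ --- exactly the quantity being bounded --- so a bootstrap from a first crude isogeny bound is needed. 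Everything else, namely the reduction to abelian subvarieties and the passage from $\deg B$ to $\deg\psi$, is routine by comparison.
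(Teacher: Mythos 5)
This theorem is not proved in the paper at all: it is imported as a black box, with the attribution and citations doing the work --- Masser--W\"ustholz \cite{MW90} for the statement with exponent~$4$, Pellarin \cite{Pe01} for the reduction to exponent~$2$ and the explicit, effective $\kappa(d)$. The authors only remark that any exponent below~$4$ would suffice for their application. So there is no ``paper proof'' to match your attempt against; the relevant comparison is with the cited literature, and there your sketch is a faithful reconstruction of the actual architecture: reduction of the isogeny problem to bounding the degree of a one-dimensional abelian subvariety of $E\times E'$, the Period Theorem as the engine, and inside it the standard transcendence triad (Siegel-lemma auxiliary construction, Schwarz-lemma extrapolation along the one-parameter subgroup, Masser--W\"ustholz multiplicity estimate), followed by Pellarin's optimization of parameters to collapse the height exponent to~$2$. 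You also correctly identify the two genuinely delicate points: the balancing act in the zero-estimate bookkeeping that produces the exponent~$2$, and the circularity in comparing $\height(j_{E'})$ with $\height(j_E)$ --- an isogeny shifts the semistable Faltings height by at most $\frac12\log(\deg)$, which is the quantity being bounded, so a bootstrap from a crude preliminary bound is indeed required, exactly as in the cited proofs.

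That said, as a proof your text is a roadmap, not an argument: all of the substance is delegated to the Period Theorem and the multiplicity estimate, each of which is a theorem of depth comparable to the statement itself, and none of the quantitative choices (degrees, orders of vanishing, Schwarz radii) that actually yield the exponent~$2$ and an effective $\kappa(d)$ are carried out. Two smaller imprecisions: a one-dimensional abelian subvariety $B\subset E\times E'$ distinct from the factors is not literally the graph of an isogeny --- rather, its two projections $p_1\colon B\to E$ and $p_2\colon B\to E'$ are isogenies and one takes $\psi=p_2\circ p_1^{D}$, with $\deg\psi$ controlled by $\deg B$; and the claim that the minimal $B$ containing your period $u$ is one-dimensional needs the observation that $u$ already lies in $\mathrm{Lie}(\Gamma_\phi)$ for the given isogeny $\phi$, so that $B\subseteq\Gamma_\phi$ and the only nonzero abelian subvariety of an elliptic curve is itself. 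Within the economy of this paper, where the theorem is explicitly treated as an external input, your outline is the right one; as a self-contained proof it would require importing or reproving the entire transcendence core.
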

Masser and W\"ustholz had exponent~$4$, and Pellarin reduced it to~$2$, which is crucial for us (in fact, any exponent below~$4$ would do).  He also gave an 
explicit expression for~$\kappa(d)$.

\begin{corollary}
\label{cpel}
Let~$E$ be a non-CM elliptic curve  defined over a number field~$K$ of degree~$d$, and 
admitting a cyclic isogeny over~$K$ of degree $\delta$. Then $\delta\le {\kappa(d)
\left(1+\height(j_E)\right)^2}$.
\end{corollary}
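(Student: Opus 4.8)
The plan is to deduce Corollary~\ref{cpel} from Theorem~\ref{tpel} by producing a second elliptic curve to which we can apply the theorem. Suppose $E$ admits a cyclic isogeny $\phi\colon E\to E'$ of degree~$\delta$ over~$K$. The natural idea is to apply Theorem~\ref{tpel} to the pair $(E,E')$: this produces \emph{some} isogeny $\psi\colon E\to E'$ of degree at most $\kappa(d)\bigl(1+\height(j_E)\bigr)^2$. The task is then to argue that the degree~$\delta$ of our given cyclic isogeny is itself bounded by the degree of~$\psi$, which is not automatic, since a priori $\psi$ and $\phi$ are unrelated isogenies between the same two curves.

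First I would recall that because $E$ is non-CM, its endomorphism ring is~$\Z$, and consequently the group of isogenies $E\to E'$ is, up to pre- and post-composition with multiplication-by-$n$ maps, quite rigid: any two isogenies $E\to E'$ differ by an endomorphism of~$E$ (equivalently of~$E'$), hence by an integer multiplication. More precisely, if $\phi,\psi\colon E\to E'$ are two isogenies, then $\hat\phi\circ\psi\in\End(E)=\Z$, so $\hat\phi\circ\psi=[m]$ for some integer~$m$, giving $\psi=\phi\circ\nobreak[m']$-type relations up to duals. The key structural point is that the cyclic isogeny~$\phi$ of degree~$\delta$ has cyclic kernel $\ker\phi\cong\Z/\delta\Z$, and I want to show that any isogeny $\psi\colon E\to E'$ has degree divisible by~$\delta$, or at least has degree $\ge\delta$.

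The cleanest route I would take is via kernels. Since $E$ is non-CM, the kernel of~$\phi$ is the \emph{unique} cyclic subgroup of~$E$ of order~$\delta$ whose quotient is~$E'$ (uniqueness coming from $\End(E)=\Z$). Given the isogeny $\psi\colon E\to E'$ furnished by Theorem~\ref{tpel}, I would examine $\ker\psi$. Writing $\psi=\lambda\circ\phi$ up to the rigidity above with $\lambda\in\Hom(E',E')\cong\Z$ in the relevant sense, one sees $\deg\psi=\deg(\lambda)\cdot\deg(\phi)$, so $\delta=\deg\phi\mid\deg\psi\le\kappa(d)\bigl(1+\height(j_E)\bigr)^2$; in particular $\delta\le\kappa(d)\bigl(1+\height(j_E)\bigr)^2$, which is exactly the assertion. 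I expect the main obstacle to be making the factorization $\deg\psi=\deg\phi\cdot(\text{something}\ge 1)$ fully rigorous: one must handle the direction of the isogeny carefully (comparing $\psi$ with $\phi$ versus with the dual~$\hat\phi$), and confirm that the integer multiplier arising from $\End(E)=\Z$ is genuinely nonzero so that no degree is lost. This is precisely where non-CM-ness is essential — over a CM curve the endomorphism ring is larger and the divisibility argument can fail, so the corollary's hypothesis that $E$ is non-CM is used here and not merely inherited from the theorem.
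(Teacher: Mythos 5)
Your proposal is correct and takes essentially the same route as the paper: both apply Theorem~\ref{tpel} to the pair $(E,E')$ and use the non-CM rigidity $\phi^D\circ\psi\in\mathrm{End}(E)=\Z$ to identify the given cyclic isogeny~$\phi$, up to sign, with the cyclic part of the small isogeny~$\psi$, whence ${\delta\le\deg\psi\le\kappa(d)\left(1+\height(j_E)\right)^2}$. The paper phrases this by assuming~$\psi$ cyclic without loss of generality and concluding ${\phi=\pm\psi}$; your divisibility formulation ${\delta\mid\deg\psi}$ is the same step, with the uniqueness of the cyclic factor made slightly more explicit.
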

\begin{proof}
Let $\phi$ be a cyclic isogeny from~$E$ to~$E'$, and let ${\phi^D \colon E' \to E}$ be
the dual isogeny. Let ${\psi \colon E\to E'}$ be an isogeny of degree bounded by 
${\kappa(d)\left(1+\height(j_E)\right)^2}$; without loss of generality,~$\psi$ may be
assumed cyclic. As $E$ has no CM, the composed map ${\phi^D \circ \psi}$ must be 
multiplication by some integer, so that ${\phi =\pm \psi}$. 
\qed
\end{proof}

\paragraph{Proof of Proposition~\ref{pubo}}

For an elliptic curve~$E$ endowed with a structure of normalizer of split 
Cartan subgroup in level~$p$ over~$\Q$, write~$C_{1}$ and~$C_{2}$ for the
obvious two independent $p$-isogenies defined over the quadratic field~$K$ 
cut-up by the inclusion of the Cartan group mod~$p$ in its normalizer. Set ${E_i 
:=E/C_i}$  and recall that there is a cyclic $p^2$-isogeny over~$K$ from~$E_{1}$ to~$E_{2}$, factorizing as a product of two $p$-isogenies: 
$$
\varphi \colon E_{1}\to E\to E_{2} .
$$
It follows from  Corollary~\ref{cpel} that ${\height(j_{E_{i}}) \geq 
\kappa_{1} p}$ for ${i=1,2}$. 

A result of Faltings \cite[Lemma 5]{Fa84} now asserts that ${\height_{\calF} (E_{1}) \leq 
\height_{\calF} (E) +\frac{1}{2}\log p}$, where $\height_{\calF}$ is Faltings' 
semistable height\footnote{Recall that $h_{\calF}(E)$ is defined as ${[K_{1} :\Q ]^{-1} \deg \omega }$, where~$K_{1}$ is an extension of~$K$ such that~$E$ has semi-stable 
reduction at every place of~$K_1$, and~$\omega$ is a 
N\'eron differential on~$E\vert_{K_1}$; it is independent of the choice of~$K_1$ and~$\omega$.}. Finally,   for any elliptic curve~$\calE$ over  a number field we have
$$
\bigl|\height (j_{\calE})-12\height_{\calF} (\calE )\bigr|\leq 6\log \bigl(1+\height 
(j_{\calE})\bigr)+O(1),
$$
see \cite[Proposition~2.1]{Si84}. (Pellarin shows that $O(1)$ can be replaced by 47.15, see~\cite{Pe01}, equation~(51) on 
page 240.) This completes the proof of Proposition~\ref{pubo} and of Theorem~\ref{tse}.
\qed

\section{Appendix: Integrality of the $j$-Invariant}
\label{sapp}

This appendix is mainly of expository nature: following  Mazur, Momose and Merel, we show
that rational points on $X_{\mathrm{split}}(p)$ are, in fact, integral.  

\begin{theorem} {\bf (Mazur, Momose, Merel).}
\label{tmmm}
For a prime ${p= 11}$ or ${p\geq 17}$, the $j$-invariant $j(P)$ of any non-cuspidal 
point of $X_\spl(p)(\Q )$ belongs to~$\Z$.
\end{theorem}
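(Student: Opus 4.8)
The plan is to prove that the $j$-invariant of a non-cuspidal rational point $P\in X_{\mathrm{split}}(p)(\Q)$ is an algebraic integer by a local analysis at each prime $\ell$, ruling out the possibility that $P$ reduces to a point of potentially multiplicative (rather than potentially good) reduction at any $\ell$. The curve $X_{\mathrm{split}}(p)$ is $\Q$-isomorphic to $X_0(p^2)/w_{p^2}$, and more usefully a point of $X_{\mathrm{split}}(p)$ carries an elliptic curve $E$ together with an \emph{unordered} pair of independent cyclic $p$-isogenies, equivalently a structure of Cartan normalizer. The condition $j(P)\notin\Z$ means that for some prime $\ell$ the curve $E$ has potentially multiplicative reduction at $\ell$, so one may study the induced local Galois representation on $E[p]$ and its relation to the Cartan normalizer structure.

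The key steps, following Mazur, Momose and Merel, are as follows. First I would reduce to showing that the corresponding point does \emph{not} lift to $X_0(p^2)(\Q)$ in a way producing a rational $p^2$-isogeny or a genuinely non-integral $j$, by invoking Mazur's theorem on rational isogenies \cite{Ma78}, which already excludes rational cyclic isogenies of degree $p$ (hence of degree $p^2$) for the primes in question. Second, at a prime $\ell$ of potentially multiplicative reduction, the theory of the Tate curve gives an explicit description of the $\ell$-adic Galois action on $E[p]$: the image lies in a Borel subgroup, with one character unramified and the other differing by the cyclotomic character. The crucial point is to show that this local Borel structure is incompatible with the global Cartan-normalizer structure unless the point is CM or the isogeny becomes rational; this is where Momose's \cite{Mo84} formal-immersion and Eisenstein-quotient arguments, refined by Merel \cite{Me05}, enter. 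Concretely, one analyzes the image of $P$ under a suitable map to a quotient of the Jacobian (the winding or Eisenstein quotient) and uses a formal immersion at a well-chosen prime $\ell$ together with $\ell$-integrality to force a contradiction with $j\notin\Z$.

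The main obstacle, and the technical heart of the argument, will be the formal immersion criterion: one must exhibit a prime $\ell$ (with $\ell$ small relative to $p$, or $\ell$ chosen via Chebotarev-type considerations) and a morphism $X_{\mathrm{split}}(p)\to A$ into an abelian-variety quotient $A$ of the Jacobian such that the induced map on the completed local ring at the reduction of the Tate-curve cusp is a formal immersion. Granting the formal immersion, the standard Mazur argument shows that two sections of the modular scheme meeting modulo $\ell$ must coincide, contradicting the distinct behaviour forced by potentially multiplicative reduction. Establishing the formal immersion requires nonvanishing of certain $q$-expansion coefficients of a basis of differentials on $A$, and in the split Cartan case this is exactly where Merel's contribution \cite{Me05} is needed to handle the primes not covered by Momose, and where the bound $p=11$ or $p\ge 17$ arises (the small primes being genuine exceptions or requiring separate verification).

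I would organize the write-up so that the deduction from these ingredients is short and self-contained, citing \cite{Ma78,Mo84,Me05} for the three pillars and presenting the local Tate-curve computation explicitly, since it is elementary and clarifies why the split (as opposed to non-split) Cartan case is amenable to this method at all.
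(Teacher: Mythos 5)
You have correctly identified the three pillars (Mazur's isogeny theorem, the Eisenstein/winding quotients with finite Mordell--Weil groups, and a formal immersion at a cusp combined with Mazur's specialization argument), and your sketch follows the same road as the paper's appendix. But there is a genuine gap: you never address \emph{which} cusp $P$ reduces to at a prime $\ell$ dividing the denominator of $j(P)$. The curve $X_\spl(p)$ has one rational cusp and $(p-1)/2$ conjugate non-rational cusps, and the formal immersion obtained from the $q$-expansion principle is available only at the reduction $\infty(\F_\ell)$ of the rational cusp; if $P$ reduced to a non-rational cusp, your final contradiction would never get off the ground. The paper spends most of its effort precisely here: Proposition~\ref{peab}~(\ref{ipfiber}) --- proved via the Verschiebung isogeny in characteristic $p$ and Raynaud's theorem on group schemes of type $(p,\dots,p)$ --- shows that $(E,B)$ and $w_p(E,A)$ agree in the fiber at $p$, whence $\tilg(P)$ vanishes in the fiber at $p$; since $\tilg(P)$ lies in the finite group $\tilJ(p)(\Q)$ and torsion specializes injectively (\'etale outside~$2$, with the rank-halving remark at~$2$), one gets $\tilg(P)=0$ generically, while the non-rational cusps map to the nontrivial torsion point $\mathrm{cl}(0-\infty)$; hence $P$ reduces to the rational cusp. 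This fiber-at-$p$ argument (which also requires parts~(\ref{insup}) and~(\ref{ipsplit}) of Proposition~\ref{peab}: no potential supersingularity at~$p$, and $p$ split in the quadratic field $K$) is the heart of Momose's contribution and is absent from your plan.

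A second, structural error: your phrase ``exhibit a prime $\ell$ \dots chosen via Chebotarev-type considerations'' inverts the logic of the proof. The prime $\ell$ is not at your disposal --- it is an \emph{arbitrary} prime dividing the denominator of $j(P)$, and the contradiction must be derived for every such $\ell$, including $\ell=2$; it is exactly $\ell=2$ with $p\not\equiv 1\bmod 8$ (where specialization of torsion is delicate) that required Merel's $q$-expansion work, and no choice of auxiliary prime can evade it. Relatedly, Mazur's theorem on rational isogenies does not exclude the cyclic $p^2$-isogeny here, since that isogeny is defined only over the quadratic field $K$, not over $\Q$; in the paper Mazur's theorem enters solely to show $K\neq\Q$ (Proposition~\ref{peab}~(\ref{ipsplit})), i.e., that Galois swaps the two $p$-isogenies, which is what makes the map $\tilg$ and the fiber-at-$p$ comparison usable in the first place.
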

The proof of this theorem is somehow scattered in the literature. Mazur~\cite[Corollary 
4.8]{Ma78} proved that a prime divisor~$\ell$ of the denominator of $j(P)$ must either be~$2$, 
or~$p$, or satisfy ${\ell\equiv\pm 1\mod p}$. The cases ${\ell\equiv\pm 1\mod p}$ and 
${\ell =p}$ were settled by Momose \cite[Proposition 3.1]{Mo84}, together with the case 
${\ell =2}$ when ${p\equiv 1\mod 8}$ \cite[Corollary 3.6]{Mo84}. Finally the case 
${\ell =2}$ with ${p\not\equiv 1\mod 8}$ was treated by Merel \cite[Theorem 5]{Me05}. 
The aim of this appendix is therefore to present a short unified proof, for the reader's convenience.
For simplicity, we actually assume in all what follows that $p\geq 37$.

Recall that the curve $X_\spl(p)$ parametrizes (isomorphism classes of) elliptic curves endowed with 
an \textsl{unordered} pair of independent $p$-isogenies. Let ${P=\bigl(E,\{A,B\}
\bigr)}$ be a $\Q$-point on $X_\spl(p)$, which we may assume to be non CM. Then the 
isogenies~$A$ and~$B$ are defined over a number field~$K$ with degree at most 2.
\begin{proposition}
\label{peab}
Let ${P=\bigl(E,\{A,B\}\bigr)\in X_\spl(p)(\Q)}$ and~$K$ be defined as above. Let
${\cal O}_K$ be its ring of integers. Then we have the following.
\begin{enumerate}
\item
\label{insup}
The curve~$E$ is not potentially supersingular at~$p$. 
\item
\label{ipfiber}
The points $(E,B)$ and $(E/A, E[p]/A)={(E/A, A^\ast)}$, where~$A^\ast$ is the isogeny 
dual to~$A$, coincide in the fibers of characteristic $p$ of $X_0(p)_{/{\cal O}_K}$.
\item
\label{ipsplit}
The field~$K$ is quadratic over~$\Q$, and~$p$ splits in~$K$. 
\end{enumerate}
\end{proposition}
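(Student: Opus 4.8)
\textbf{The plan} is to study the reduction of the point $P=(E,\{A,B\})$ at a prime above~$p$ and combine local information coming from the theory of the Hecke correspondence~$T_p$ with the fact that~$P$ comes from a $\Q$-rational point on $X_\spl(p)$. The three assertions are clearly interlocking: the splitting statement~(\ref{ipsplit}) is the real target, while~(\ref{insup}) and~(\ref{ipfiber}) are the local facts that feed into it.

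First I would address~(\ref{insup}). Suppose for contradiction that~$E$ has potentially supersingular reduction at~$p$. Over an extension where~$E$ acquires good supersingular reduction, the formal group has height~$2$, so the kernel of reduction absorbs all of $E[p]$ in the sense that the two cyclic subgroups~$A$ and~$B$ can no longer be distinguished: a supersingular elliptic curve in characteristic~$p$ has a \emph{unique} subgroup scheme of order~$p$ (the kernel of Frobenius), so the reductions $\bar A$ and $\bar B$ must coincide. But this contradicts the fact that the pair of isogenies carried by a point of $X_\spl(p)$ reduces to an \emph{unordered pair} that is genuinely split off the supersingular locus; more precisely, the $\Q$-structure forces a symmetry between the two reduced subgroups that is incompatible with their being equal when one follows the Galois/Frobenius action on $E[p]$. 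I would make this precise by examining how Frobenius acts on the reductions of~$A$ and~$B$.

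Next comes~(\ref{ipfiber}), which is the heart of the local analysis. The key observation is that the Hecke correspondence $T_p$ on $X_0(p)$ degenerates in characteristic~$p$ to the relation $(x,F)+(F^\ast,x)$ identifying a point with its image under Frobenius and Verschiebung. I would interpret the point $(E,B)$ and the point $(E/A,A^\ast)$ as lying in the fibre of characteristic~$p$ of $X_0(p)_{/\OO_K}$ and show, using~(\ref{insup}) to guarantee ordinary reduction, that in the ordinary fibre Frobenius kills the connected part and the two listed points become equal because one is obtained from the other by the Frobenius/Verschiebung factorization of multiplication by~$p$. Concretely, ordinary reduction makes $E[p]$ split as a connected-\'etale extension $\mu_p \times \Z/p$; the subgroup~$A$ reduces to one of these two pieces and~$B$ to the other, and passing to $E/A$ exchanges the r\^oles via the dual isogeny, which is exactly the assertion that the two points coincide mod~$p$.

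Finally, for~(\ref{ipsplit}), I would feed~(\ref{ipfiber}) into a Galois-descent argument. That the point of $X_\spl(p)$ is $\Q$-rational means that the unordered pair $\{A,B\}$ is $\gal(\bar\Q/\Q)$-stable, so complex conjugation (or any element of $\gal(\bar\Q/K)$ versus its complement) either fixes each of~$A$,~$B$ or swaps them; the two isogenies being independent and defined over~$K$ of degree at most~$2$ forces~$K$ to be exactly quadratic, since if $K=\Q$ the Galois action on $E[p]$ would land in a split Cartan rather than its normalizer. The splitting of~$p$ in~$K$ is then read off from~(\ref{ipfiber}): the congruence $(E,B)\equiv (E/A,A^\ast)\bmod p$ says that the two primes of~$K$ above~$p$ are genuinely distinct (one carrying the connected, the other the \'etale part), which is precisely the statement that~$p$ splits rather than ramifies or remains inert. \textbf{The main obstacle} will be step~(\ref{ipfiber}): correctly identifying the degeneration of $T_p$ in characteristic~$p$ and tracking the connected-\'etale dichotomy through the quotient $E/A$ requires care with the integral models over $\OO_K$, and this is where I expect the argument of Mazur-Momose to be genuinely needed rather than formal.
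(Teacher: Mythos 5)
Your overall strategy (reduce mod~$p$, exploit the Frobenius--Verschiebung degeneration of $X_0(p)$ in characteristic~$p$, then feed the local identity into a Galois argument) is the same as the paper's, but each of the three steps is missing the ingredient that actually makes it work. For~(\ref{insup}), your contradiction does not materialize: if~$E$ were potentially supersingular, the schematic closures of~$A$ and~$B$ would indeed both reduce to the unique order-$p$ subgroup scheme of the supersingular special fibre, but two \emph{distinct} generic subgroups can perfectly well have \emph{equal} special fibres, so the coincidence $\bar A=\bar B$ is not absurd, and no amount of ``tracking the Frobenius action'' turns it into a contradiction. The paper's actual proof of~(\ref{insup}) is Galois-theoretic: by Serre's study of the inertia action at~$p$ on the formal group, potential supersingularity forces inertia to act through a fundamental character of level~2, so the image of inertia contains an element of order~4 or~6 of a \emph{nonsplit} Cartan subgroup of $\GL(E[p])$, and such an element cannot stabilize the two lines~$A$,~$B$ even after restriction to the index-2 subgroup of $\gal(\bar\Q/\Q)$ fixing them. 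For~(\ref{ipfiber}), your connected--\'etale dichotomy is false: at most one of the two lines can reduce radicially, but nothing forces either of them to be the multiplicative (canonical) line, and generically \emph{both}~$A$ and~$B$ reduce to the \'etale subgroup $\Z/p$ of the ordinary special fibre. The paper instead assumes, as it may by symmetry, that the closure of~$A$ is \'etale, identifies $\overline{E/A}$ with $\bar E^{(p)}$ via Verschiebung and then with $\bar E$ itself \emph{using the rationality of $j(P)$} (so that $j$ mod~$p$ lies in $\F_p$ and the Frobenius twist is harmless --- an obstruction your sketch never addresses), and extends the generic isomorphism ${B\cong E[p]/A}$ over the ring of integers by Raynaud's theorem on group schemes of type $(p,\dots,p)$; both of these inputs are absent from your proposal.

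For~(\ref{ipsplit}) there are two gaps. First, quadraticity: your claim that $K=\Q$ would make the Galois image ``land in a split Cartan rather than its normalizer'' is not a contradiction --- nothing in the definition of a point of $X_\spl(p)(\Q)$ forbids the image from lying inside the Cartan itself; what rules out $K=\Q$ is Mazur's theorem on rational isogenies of prime degree (a non-CM curve over~$\Q$ has no rational $p$-isogeny for ${p\geq 37}$), which you never invoke. Second, splitting: your reading of~(\ref{ipfiber}) as saying ``the two primes above~$p$ are genuinely distinct, one carrying the connected and the other the \'etale part'' presupposes the conclusion. The paper's mechanism is the Deligne--Rapoport geometry of $X_0(p)_{\F_p}$: it has two components, each stable under $\gal(\bar\F_p/\F_p)$ but \emph{swapped} by~$w_p$. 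Writing $(E,B)=\sigma(E,A)$ with $\sigma\in\gal(K/\Q)$, if~$p$ were inert or ramified there would be a single prime $\mathcal P$ above~$p$, and the identity $\sigma(E,A)_{k_{\mathcal P}}=(E,B)_{k_{\mathcal P}}=\bigl(w_p(E,A)\bigr)_{k_{\mathcal P}}$ coming from~(\ref{ipfiber}) would force $(E,A)$ to specialize to an intersection point of the two components; those points are supersingular, contradicting~(\ref{insup}). None of this --- the component structure, the $w_p$-swap, the supersingular intersection points --- appears in your proposal, so while the intuition is sound, the proof of the splitting statement is essentially missing.
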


\begin{proof}
Parts~(\ref{insup}) and~(\ref{ipsplit}) are proved in~\cite{Mo84}, Lemmas~1.3
and~3.2 respectively. Part~(\ref{ipfiber}) follows from \cite[proof of 
Proposition~3.1]{Pa05}. For the convenience of the reader we here sketch a proof (with 
somewhat different (and simpler) arguments).

  It follows from Serre's study of the action of inertia groups $I_{p}$ at $p$ on the formal 
group of elliptic curves that if $E$ is potentially supersingular then $I_{p}$ (potentially) 
acts via a ``fundamental character of level 2", so that the image of inertia contains a subgroup
of order 4 or 6 in a nonsplit Cartan subgroup of $\GL (E[p])$ (see \cite[Paragraph 1]{Se72}). 
This gives a contradiction 
with the fact that a subgroup of index 2 in the absolute Galois group of $\Q$ preserves two 
lines in $E[p]$, whence part~(\ref{insup}). 

   For~(\ref{ipfiber}) we remark that we may assume the schematic closure of $A$ to be 
\'etale over $\cal O$ (the ring of integers of a completion $K_{\cal P}$ of $K$ at a prime 
$\cal P$ above $p$, whose residue field we denote by $k_{\cal P}$): indeed, as $E$ is not 
potentially supersingular at $\cal P$, at most one line in $E[p]$ can be purely radicial over 
$k_{\cal P}$. Up to replacing $K_{\cal P}$ by a finite ramified extension, we shall also
assume $E$ is semistable over $K_{\cal P}$. Now $E/A$ is isomorphic over 
${\overline{k}}_{\cal P}$ to $E^{(p)}$ via the Vershiebung isogeny, and the latter is in 
turn isomorphic to $E_{/k_{\cal P}}$ as $E$ has a model over $\Z$. Moreover the
isomorphism between $B$ and $E[p]/A$ as $K$-group schemes induced by the projection $E
\to E/A$ extends to an isomorphism over $\cal O$ by Raynaud's theorem on group schemes
of type $(p,\dots ,p)$, as recalled in \cite[Proof of Lemma 1.3]{Mo84}. It follows that 
$(E,B)_{k_{\cal P}}$ is isomorphic to $(E/A ,E[p]/A)_{k_{\cal P}} =(w_{p} (E,A)
)_{k_{\cal P}}$, whence~(\ref{ipfiber}). 

   Pushing this reasonning further yields~(\ref{ipsplit}). We indeed first note that, by 
 Mazur's theorem on rational isogenies, $K\neq\Q$ for $p\geq 37$, and with the above 
notations $(E,B)$ is also equal to $\sigma (E,A)$ for a $\sigma\in\mathrm{Gal}(K/\Q )$. 
Now $(E,A)$ and $(E,B)$ define two points in $X_0(p)({\cal O})$. The two components of 
$X_{0} (p)_{\F_p}$ are left stable by ${\mathrm{Gal}}(\overline{\F}_p /\F_p )$, but 
switched by $w_{p}$. Therefore if $p$ does not split in $K$, the fact that $\sigma (E,A)_{
k_{\cal P}} =(E,B)_{k_{\cal P}}=(w_{p} (E,A))_{k_{\cal P}}$ leaves no other choice
to $(E,A)$ than specializing at the intersection of the components. As those intersection 
points modularly correspond to supersingular curves, this contradicts~(\ref{insup}), and
completes the proof. \qed
\end{proof}

\bigskip

The curve $X_\spl(p)$  admits an obvious double covering by the curve $X_\spcar (p)$, 
parameterizing elliptic curves endowed with an \textsl{ordered} pair of $p$-isogenies. We 
denote by~$w$ be the generator of the Galois group of this covering, that is,~$w$ modularly 
exchanges the two $p$-isogenies. In symbols, if  $\bigl(E,(A,B)\bigr)$ is a point on 
$X_\spcar(p)$, then  ${w\bigl(E,(A,B)\bigr)=\bigl(E,(B,A)\bigr)}$. We recall certain
properties of the modular Jacobian $J_0(p)$ and its \textsl{Eisenstein quotient} $\tilJ(p)$ 
(see~\cite{Ma77}). 

\begin{sloppypar}

\begin{proposition}
\label{pmaz}
Let~$p$ be a prime number. Then we have the following.

\begin{enumerate}

\item \cite[Theorem~1]{Ma77}\quad 
The group $J_0(p)(\Q)_\tors$ is cyclic and generated by $\cl(0-\infty)$, where~$0$ 
and~$\infty$ are the cusps of $X_0(p)$. Its order is equal to the numerator of the quotient
${(p-1)/12}$.

\item \cite[Theorem~4]{Ma77}\quad 
The group $\tilde J(p)(\Q)$ is finite. Moreover, the natural projection ${J_0(p)\to \tilde 
J(p)}$ defines an isomorphism ${J_0(p)(\Q)_\tors \to \tilJ(p)(\Q)}$.

\end{enumerate}

\end{proposition}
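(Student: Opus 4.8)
The two assertions are, respectively, Theorems~1 and~4 of Mazur's paper \cite{Ma77}, so in the article itself nothing is proved beyond a citation; nevertheless let me indicate the structure of the argument I would give. The plan is to separate the elementary computation of the order of $\cl(0-\infty)$ from the two genuinely deep inputs, namely the exactness of the cuspidal subgroup inside $J_0(p)(\Q)_\tors$ and the finiteness of $\tilJ(p)(\Q)$, both of which rest on Mazur's theory of the Eisenstein ideal.

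For the order of the cuspidal class in part~(a), I would exhibit an explicit modular unit. The function $\Delta(z)/\Delta(pz)=(\eta(z)/\eta(pz))^{24}$ is $\Gamma_0(p)$-automorphic, holomorphic and non-vanishing on $\HH$, and a direct $q$-expansion computation (using $\mathrm{ord}_\infty$ and the Atkin--Lehner symmetry $w_p$ exchanging the two cusps) shows its divisor is $(p-1)\bigl((0)-(\infty)\bigr)$. Extracting the largest legitimate root, which is exactly the factor $\gcd(p-1,12)$, produces a unit with divisor $n\bigl((0)-(\infty)\bigr)$ where $n:=(p-1)/\gcd(p-1,12)$ is the numerator of $(p-1)/12$; this bounds the order of $\cl(0-\infty)$ above by $n$, and checking that no smaller multiple is principal gives the matching lower bound.

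The hard direction in part~(a) is that $J_0(p)(\Q)_\tors$ contains nothing beyond this cyclic cuspidal group. Here the key mechanism is that for a prime $\ell\neq p$ of good reduction the Eichler--Shimura congruence identifies $T_\ell$ with $\mathrm{Frob}_\ell+\ell\,\mathrm{Frob}_\ell^{-1}$ on $J_0(p)$ in characteristic~$\ell$. A rational torsion point reduces to a point fixed by $\mathrm{Frob}_\ell$, hence is annihilated by $T_\ell-\ell-1$ modulo~$\ell$; since prime-to-$\ell$ torsion injects under reduction, one lifts this to a genuine relation and concludes that $J_0(p)(\Q)_\tors$ is killed by $T_\ell-\ell-1$ for almost all~$\ell$, hence by the Eisenstein ideal $\mathcal{I}\subset\mathbb{T}$ (generated by these elements together with $1+w_p$). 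The bulk of the work, and what I expect to be the main obstacle, is the local analysis of $\mathbb{T}/\mathcal{I}$ and of the $\mathcal{I}$-torsion at each Eisenstein prime, showing that $J_0(p)[\mathcal{I}](\Q)$ is precisely the cuspidal group; the prime $\ell=p$ must be treated separately through the structure of the formal group and of the special fibre at~$p$.

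For part~(b), the Eisenstein quotient $\tilJ(p)$ is the quotient of $J_0(p)$ cut out so that $\mathcal{I}$ acts on it through an isogeny with finite kernel; its finiteness over~$\Q$ amounts to proving its Mordell--Weil rank is zero, which is the deepest point and which Mazur obtains by a descent exploiting the $\mathbb{T}$-module structure together with the non-vanishing of the relevant $L$-functions. Granting finiteness, $\tilJ(p)(\Q)=\tilJ(p)(\Q)_\tors$; since by part~(a) all rational torsion of $J_0(p)$ is cuspidal and the class $\cl(0-\infty)$ is not annihilated by the projection $J_0(p)\to\tilJ(p)$, the induced map $J_0(p)(\Q)_\tors\to\tilJ(p)(\Q)$ is injective, and an order count against the image of $\cl(0-\infty)$ yields surjectivity, giving the asserted isomorphism.
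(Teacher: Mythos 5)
The paper gives no proof of this proposition beyond the citation of Theorems~1 and~4 of Mazur's Eisenstein ideal paper \cite{Ma77}, which is exactly what you do, so your proposal matches the paper's approach; your supplementary outline of Mazur's arguments (the eta-quotient computation of the cuspidal class order, the Eichler--Shimura congruence forcing rational torsion to be Eisenstein, the local analysis at Eisenstein primes, and the finiteness of the Eisenstein quotient) is a faithful summary. One small caveat on the sketch itself: Mazur proves the finiteness of $\tilJ(p)(\Q)$ purely by his Eisenstein descent, exploiting the $\mathbb{T}$-module structure, and does not invoke non-vanishing of $L$-values --- that route belongs to later treatments via winding quotients \`a la Merel.
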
 
As Mazur remarks, Raynaud's theorem on group schemes of type $(p,\ldots ,p)$ 
insures that $J_0(p)(\Q)_\tors$ defines a $\Z$-group scheme which, being constant in the 
generic fiber, is \'etale outside $2$, and which at 2 has \'etale quotient of rank at least 
half that of $J_0(p)(\Q)_\tors$.
\end{sloppypar}

\paragraph{Proof of Theorem~\ref{tmmm}}

For an element~$t$ in the $\Z$-Hecke 
algebra for $\Gamma_{0} (p)$, define the morphism~$g_t$ from 
$X_\spcar^{\mathrm{smooth}} (p)_{/\Z}$ to $J_{0} (p)_{/\Z}$ which 
extends the morphism on generic fibers:
$$
g_{t} \colon
\left\{
\begin{array}{rcl}
X_{\mathrm{sp.Car.}} (p) & \to & J_{0} (p) \\
Q=\bigl(E,(A,B)\bigr) & \mapsto & t\cdot {\mathrm{cl}} \bigl((E,A) -
(E/B ,E[p]/B )\bigr) .
\end{array}
\right.
$$
Let ${J_{0} (p)\stackrel\pi\to \tilJ(p)}$ be the projection to the Eisenstein quotient, 
and ${\tilg_t :=\pi\circ g_t}$. One checks that ${g_t \circ w=-w_{p} \circ g_t}$ 
 and one knows that 
$(1+w_{p})$ acts trivially on $\tilJ(p)$ from~\cite[Proposition 17.10]{Ma77}. 
Therefore $\tilg_t$ actually factorizes through a $\Q$-morphism from $X_\spl (p)$ to 
$\tilJ(p)$, that we extend by the universal property of N\'eron models to a map from 
$X^{\mathrm{smooth}}_\spl (p)_{/\Z}$ to $\tilJ(p)_{/\Z}$. We still denote this 
morphism by~$\tilg_t$ and we put ${\tilg=\tilg_1}$.

Let~$P$ be a rational point on $X_\spl(p)$, and~$\ell$ a prime divisor of the denominator 
of $j(P)$.  Then~$P$ specializes to a cusp at~$\ell$. Recall that $X_\spl(p)$ has one cusp 
defined over~$\Q$ (the \textsl{rational cusp}), and ${(p-1)/2}$ other cusps, conjugate 
over~$\Q$. We first claim that~$P$ specializes to the rational cusp. Indeed, it follows from
Propositions~\ref{peab}~(\ref{ipfiber}) that ${\tilg(P)(\F_p )=0(\F_p )}$, and by the 
remark after Proposition~\ref{pmaz}, ${\tilg(P)(\Q )=0(\Q )}$ (recall $p>2$). The non-rational cusps of
$X_{\mathrm{split}} (p)(\C )$ map to $\mathrm{cl} (0-\infty)$ in $J_{0} (p)(\C )$ (this 
can be seen with the above modular interpretation of $\tilg_t$, using the fact that the 
non-rational cusps specialize at $p$ to a generalized 
elliptic curve endowed with a pair of {\it \'etale} isogenies. Or, if $f$ denotes the 
map $f\colon X_{\mathrm{sp.C.}} (p)\to X_0 (p)$, $(E,(A,B))\mapsto (E,A)$, one has
$g_1 = \mathrm{cl} (f-w_p fw)$, and as $f(c_{i})=0\in X_0 (p)$ for $c_i$ a non-rational
cusp and $w$ permutes the $c_{i}$s, one sees that $\tilg (c_{i})=\mathrm{cl} (0-
\infty)$. For more details see for instance the proof of Proposition 2.5 in \cite{Mo84}). 
Therefore, for $p\geq 11$ and $p\neq13$, Proposition~\ref{pmaz} implies that if $P$ 
specializes to a non-rational cusp at $\ell$ then $\tilg (P)$ would not be $0$ at $\ell$, a 
contradiction. 

Now take an $\ell$-adically maximal element~$t$ in the Hecke algebra which kills the 
winding ideal~$I_{e}$. Again, as $t(1+w_{p}) =0$, the above morphism $g_{t}$ factorizes
through a morphism $g_{t}^+$ from $X_{\mathrm{split}}^{\mathrm{smooth}} (p)_{/\Z}$
to $t\cdot J_{0} (p)_{/\Z}$. Moreover $g_{t}^+ (P)$ belongs to $t\cdot J_{0} (p)(\Q )$, 
hence is a torsion point, as $t\cdot J_{0} (p)$ is isogenous to a quotient of the winding 
quotient of $J_{0} (p)$. We see as above by looking at the fiber at $p$ that $g_{t}^+ (P)=0$ 
at $p$, hence generically. We then easily check by using the $q$-expansion principle, as in 
\cite[Theorem 5]{Me05}, that $g_{t}^+$ is a formal immersion at the specialization 
$\infty (\F_\ell )$ of the rational cusp on $X_{\mathrm{split}} (p)$. This allows us 
to apply the classical argument of Mazur (see e.g. \cite[proof of Corollary 4.3]{Ma78}), 
yielding a contradiction: therefore $P$ is not cuspidal at $\ell$.  \qed

{\footnotesize

}

\end{document}